\newtheorem{theorem}{Theorem}
\newtheorem{lemma}{Lemma}
\newtheorem{corollary}{Corollary}
\newtheorem{proposition}{Proposition}
\newtheorem{definition}{Definition}
\newtheorem*{theorem*}{Theorem}
\theoremstyle{remark}
\newtheorem{remark}{Remark}
\def\N{\mathbb{N}}
\def\Z{\mathbb{Z}}
\def\R{\mathbb{R}}
\def\P{\mathbb{P}}
\def\E{\mathbb{E}}
\def\FF{\mathscr{F}}
\def\GG{\mathscr{G}}
\def\SS{\mathcal{S}}
\def\NN{\mathcal N}
\renewcommand{\phi}{\varphi}
\renewcommand{\epsilon}{\varepsilon}
\newcommand{\1}{{\text{\Large $\mathfrak 1$}}}
\newcommand{\var}{\operatorname{var}}
\renewcommand{\liminf}{\varliminf}
\newcommand{\eqdist}{\stackrel{\text{\rm (d)}}{=}}
\definecolor{mygray}{gray}{0.9}
\definecolor{deeppink}{RGB}{255,20,147}
\definecolor{mygreen}{rgb}{0.05, 0.576, 0.03}
\definecolor{myred}{rgb}{0.768, 0.09, 0.09}
\long\def\symbolfootnote[#1]#2{\begingroup
\def\thefootnote{\fnsymbol{footnote}}\footnote[#1]{#2}\endgroup}
\newcommand{\keywords}[1]{ \noindent {\footnotesize
             {\small \em Keywords and phrases.} {\sc #1} } }
\newcounter{para}
\newcommand{\para}{ \refstepcounter{para} \paragraph{\thepara.} }
\newcommand{\resetpara}{\setcounter{para}{0}}
\newcommand{\paratitle}[1]{ \refstepcounter{para} \paragraph{\thepara. #1} }
\def\WW{\widetilde W}
\def\RR{\mathcal{R}}
\def\convd{ \xrightarrow[]{\text{(d)}} }
\def\sog{\operatorname{SOG}}
\def\ccm{\operatorname{CCM}}
\def\pwit{\operatorname{PWIT}}
\def\UU{\mathcal{U}}
\begin{document}

\title{\Large \bf Limiting properties of random graph models with vertex
and edge weights} 
\author{\sc Sergey Foss
\thanks{Heriot-Watt University and Novosibirsk State University;
S.Foss@hw.ac.uk; Research supported by RSF grant 17-11-01173}
 \and \sc Takis Konstantopoulos
\thanks{Department of Mathematical Sciences, The University of Liverpool, Peach Street,
Liverpool L69 7ZL, UK; takiskonst@gmail.com; Research supported by
Swedish Research Council grant 2013-4688}
}
\date{\small \em 1 August 2018}
\maketitle

\begin{abstract}
This paper provides an overview of results, concerning 
longest or heaviest paths, in the area of random directed graphs
on the integers  along with some extensions.
We study first-order asymptotics of heaviest paths allowing weights both on edges
and vertices and assuming that weights on edges are signed.  
We aim at an exposition that summarizes, simplifies, and extends proof ideas.
We also study sparse graph asymptotics, showing convergence
of the weighted random graphs to a certain weighted graph that
can be constructed in terms of Poisson processes. We are motivated
by numerous applications, ranging from ecology to parallel computing model.
It is the latter set of applications that necessitates the introduction of vertex weights.
Finally, we discuss some open problems and research directions.
\\[1mm]
\keywords{Random graphs; Stochastic networks; Limit theorems}
\end{abstract}

\symbolfootnote[0]{To appear in {J.\ Stat.\ Phys.}\ {\tt
https://doi.org/10.1007/s10955-018-2080-3}; the current version may differ from
the published one in some minor points.}

\section{Introduction and background}


The well-known Erd\H{o}s-R\'enyi random graph model \cite{BOLbook}
has an ordered version introduced in \cite{BE84} by Barak and Erd\H{o}s.
Declare a pair $(i,j)$ of integers, $1\le i < j \le n$, an edge
with probability $p$, independently from pair to pair.
The random directed graph thus constructed, being so natural,
has emerged in many areas of applied science.
In {\em mathematical biology}
 (ecology), such graphs are used to model {\em community food
webs} \cite{CBN90,COHNEW91}. The interest in this area is the longest path
in the graph as this is an abstraction of the longest food chain upon
which survivability of a biological population depends. 
Asymptotics for this were obtained by Newman \cite{NEWM92} in the regime
when $p\to 0$ at a certain rate. 

Independently, the graph emerged
as a model in the area of performance evaluation of {\em parallel
computing systems} \cite{GNPT86,ISONEW94} where vertices represent various
tasks whereas edges represent precedence constraints. The longest
path then represents the total execution time  when tasks have identical
processing times.
These papers actually also discuss the more realistic case when
random weights, representing task execution times, are given to the vertices. 
Weights on the edges play a secondary role in this set of applications and this
is what motivated us to make this extension that actually turns out to be
easy to handle.

An infinite version of the model is the graph analyzed in \cite{FK03}:
Let $\Z$ be the set of vertices and let $(i,j)$, $i<j$, be an edge with 
probability $p$, independently from edge to edge. We used the 
term {\em ``stochastic  ordered graph''} for this model in \cite{FK03}
and we shall use the abbreviation $\sog(\Z,p)$ for it in the current paper.
Our motivation in \cite{FK03} was a {\em queueing system} with precedence
constraints and identical  service times, arriving randomly over time.
The stochastic stability of this system (i.e., convergence in
distribution of the
state, such as number of customers in the queue  at time $t$ as $t \to \infty$)
depends on the asymptotic growth of the length $L_n$ of
the longest path in this graph between two vertices at distance $n$.

An application in {\em algebra} was considered by Alon et al.\ \cite{ABBJ94}
where the graph is seen as defining a partial order
on $\{1,\ldots,n\}$ and the question of interest is the number of
linear extensions of the random partial order.

Yet another application of a continuous-vertex extension of $\sog(\Z,p)$
appears in the {\em physics} literature: Itoh and Krapivsky \cite{IK12} introduce
a version, called {\em ``continuum cascade model''} 
of the stochastic ordered graph with set of vertices in $\R_+$
and study asymptotics for the length of longest paths between $0$ and $t>0$,
deriving recursive integral equations for its distribution.

In \cite{FK03} we actually studied a more general version of
$\sog(\Z,p)$ and allowed dependence between the Bernoulli random variables
defining connectivities between edges. Putting the graph in a stationary
and ergodic framework, we showed that the longest length $L_n(p)$
satisfies
\[
\frac{L_n(p)}{n} \to C^0(p), \text{ as } n \to \infty \text{ a.s.\ and in $L^1$,}
\]
for some constant $C^0(p)$. Estimating the constant $C^0(p)$ is essential 
in all areas of applications mentioned above. In a general stationary-ergodic
framework, such estimates are not available. However, for the $\sog(\Z,p)$
model, we were able to reduce the question of estimating $C^0(p)$ to a 
question of analyzing the behavior of an interacting particle system
that we referred to as the {\em ``infinite bin model''}. Using 
{\em extended renovation theory} and Markov chain analysis, we were able
to obtain sharp computable bounds for $C^0(p)$ for all $p$.
In particular, we showed that
\[
C^0(1-q) = 1-q+q^2-3q^3+7q^4 + O(q^5) \text{ as } q \to 0,
\]
that is, in the dense graph regime.
More recently, Mallein and Ramassamy \cite{MR} and \cite{MR2},
using coupling between Barak-Erd\H{o}s graphs and infinite bin models,
managed to provide a full analytic expansion for $C^0(p)$ when $p>0$,
thus completing the last display. They also showed that $C^0(p)$
has first but not second derivative at $p=0$.

In \cite{DFK12}, a further extension of $\sog(\Z,p)$ was given,
one where the edge probabilities depend on the physical distance between
the endpoints. Doing so, we managed to simplify the arguments of 
\cite{FK03} that led to a law of large numbers (LLN) and a central limit
theorem (CLT) for $L_n(p)$.
In a more recent paper \cite{FMS}, the $\sog(\Z,p)$ model was extended
by adding i.i.d.\ random weights to the edges.
A rather tedious extension of the arguments of \cite{DFK12}
was devised in \cite{FMS} in order that both LLN and CLT be derived.
The possibility that weights be heavy-tailed, leading to a
different behavior of the graph, was also studied.

Studying longest paths is also the subject in {\em last passage percolation}
problems in probability theory. In this area, one is given a random directed
graph (think of $\Z_+ \times \Z_+$ with edges $(x,y)$ where
$x=(x_1,x_2)$ is below $y=(y_1,y_2)$ component-wise) and random weights on
the vertices. The weight of a path is the sum of the weights of its vertices.
One is interested in studying the weight of heaviest path contained
in a finite chunk of the graph of ``size'' $n$, as $n \to \infty$.
In a seminal paper, Johansson \cite{JOHANSSON00} considered the last passage
percolation problem on $\Z_+ \times \Z_+$ with i.i.d.\ geometrically
distributed weights on the vertices and obtained that its scaled
fluctuations from its mean converge in distribution to the 
Tracy-Widom distribution appearing in random matrix theory.

Motivated by the last paper, we studied, in \cite{DFK12},
the question of longest paths in a  {\em stochastic ordered ``slabgraph''}
of width $N$ and showed that the asymptotic fluctuations converge
in distribution to the distribution of the largest eigenvalue of 
a random $N \times N$ matrix in the GUE ({\em Gaussian Unitary Ensemble})
\cite{BS2005,BAI05},
The question of what happens when $N \to \infty$ was considered in 
\cite{KT13} and, again, convergence to the Tracy-Widom distribution
\cite{TW93,TW94}
was shown.

In this paper we study an extension of 
$\sog(\Z,p)$ with weights \underline{both} on the vertices and the edges.
We allow weights on the edges to take negative values. 
In doing so, we review the techniques established in the literature,
simplify some of the arguments and unify several results.
Note that introducing weights on both vertices and edges introduces dependencies
between paths that share common vertices.
Let $u$ and $v$ denote random variables representing typical 
edge and vertex weights, respectively.
In fact, we let $u_{i,j}$, $v_i$, $i, j \in \Z$, $i < j$, be independent random
variables where the $u_{i,j}$ all have the distribution of $u$ and the $v_i$ the
distribution of $v$. The weighted graph is obtained by letting, as before,
$(i,j)$ be an edge with probability $p$ but, in addition, we assign weight $u_{i,j}$ to
$(i,j)$. We also assign weight $v_i$ to each $i \in \Z$.
We denote by $\sog(\Z,p,u,v)$ the corresponding graph;
see Section \ref{themodel}. The graph described in the previous paragraphs was 
the graph $\sog(\Z,p,1,0)$, that is, each edge, existing with probability $p$,
is counted as having weight $1$, whereas vertices have no weights
\cite{ABBJ94,BE84,CBN90,COHNEW91,DFK12,FK03,MR,MR2}.
The graph $\sog(\Z, p, u,0)$ was first considered in \cite{FMS}.
Note that the special case $\sog(\Z,1,1,0)$ also makes sense and is also
considered in \cite{FMS}
The graph $\sog(\Z,p,1,v)$ is the one that was essentially introduced
in the work of \cite{GNPT86}. It should be noted that the area of performance evaluation
of parallel processing system is vast and it is not our intention to overview the it.

What we do next is this: 
We prove the strong law of large numbers (SLLN) assuming that 
the  vertex weight $v$ is a.s.\ positive with finite expectation,
the edge weight $u$ has positive and finite expectation and that its
positive part has finite variance (the negative part may have infinite variance).
We then prove a functional central limit theorem (CLT) assuming positive
edge and vertex weights with finite second and third moment, respectively.
We then study the sparse graph limit of the whole random graph,
showing that it becomes a random weighted tree, a weighted version
of the so-called {\em Poisson Weighted Infinite Tree} (PWIT) introduced
by Aldous and Steele \cite{AS} for the study of combinatorial
optimization problems. We provide simple arguments of why the limit
should be so and also discuss equations satisfied by functional of
the limiting random tree.
Finally, we devote the last section to discussing a number
of open and exciting new problems that we believe are of interest in 
several areas of applications of engineering, biology, computer science,
stochastic networks, and statistical physics.


\section{The model with weights on both edges and vertices}
\label{themodel}
We use the notation $\sog(\Z,p)$ for the directed random graph
(stochastic ordered graph) on the set of integers 
$\Z$ obtained
by letting $(i,j)$, $i< j$, be a directed edge with probability $p$.
(We may replace $\Z$ by any totally ordered countable set and we shall
later have occasion to do so.)
This is done independently from edge to edge.
We point out that the restriction of 
$\sog(\Z,p)$ on a finite interval
 is  an ordered-version of the Erd\H{o}s-R\'enyi graph \cite{BE84}.

It is convenient to denote the presence of an edge $(i,j)$, $i < j$, by
\[
\alpha_{i,j} := \1_{\text{$(i,j)$ is an edge in $\sog(\Z,p)$}}.
\]
Then $(\alpha_{i,j})_{i<j}$ is a collection of i.i.d.\ Bernoulli random variables.
The case $p=0$ is trivial and shall not be considered.
The case $p=1$ corresponds to the full ordered graph $\sog(\Z,1)$ 
with  edges all the pairs $(i,j)$ with $i<j$ (there is 
nothing random in this graph).

In addition, we consider a pair $(u,v)$ of  independent random variables
that serve as edge and vertex weights, respectively.
In other words,
consider an array $(u_{i,j})_{i<j}$ of i.i.d.\ copies of $u$
and a sequence $(v_i)_i$ of i.i.d.\ copies of $v$.
We assume that the three sets, $(\alpha_{i,j})$, $(u_{i,j})$, $(v_i)$
are independent and let $\sog(\Z,p,u,v)$ denote the $\sog(\Z,p)$ with
weights $u_{i,j}$ added on each edge $(i,j)$ and $v_i$ on each vertex $i$.
The formal relation between the two is $\sog(\Z,p) = \sog(\Z,p,1,0)$.
We shall also consider the  auxiliary graph $\sog (\Z,p,u,0)$
with zero weights on the vertices.

A path $\pi$ in $\sog(\Z,p,u,v)$ (or, equivalently, in $\sog(\Z,p)$)
is a finite sequence of vertices $i_0 < i_1< \cdots < i_\ell$
such that $(i_{r-1},i_r)$ are edges, $r=1,\ldots, \ell$.
The path $(i_0, i_1,\ldots,i_\ell)$ is a path from $i$ to $j$ if $i_0=i$ and $i_\ell=j$.
Let $\Pi_{i,j}(p)$ be the set of paths from $i$ to $j$.
This set is random and may very well be empty.
If $\Pi_{i,j}(p)\not=\varnothing$ we say that $i$ and $j$ are connected
(and by this we always mean that the connection is via a path
from $i$ to $j$).

We define the weight of a path $\pi$ in $\Pi_{i,j}(p)$ by
\begin{equation}
\label{pathweight}
w(\pi)=\sum_{r=1}^\ell ( v_{i_{r-1}} + u_{i_{r-1},i_r}),
\quad \pi=(i_0,i_1,\dots,i_\ell) \in \Pi_{i,j}(p).
\end{equation}
In Section \label{secag} we will assume that $u$ can 
take negative values. Therefore, $w(\pi)$ can be negative.
We then
consider the maximization problem
\begin{equation}
\label{max1}
w_{i,j}:=\sup\{w(\pi):\, \pi \in \Pi_{i,j}(p)\}
\end{equation}
and set
\begin{equation}
\label{max1capital}
W_{i,j} := w_{i,j}^+.
\end{equation}

For the special case of $\sog(\Z,p,u,0)$ we let $\widehat w_{i,j}$, $\widehat W_{i,j}$
denote the quantities corresponding to \eqref{max1}, \eqref{max1capital},
respectively.

For the even more special case of $\sog(\Z,p,1,0)$ we let 
$w^0_{i,j}$, $W^0_{i,j}$
denote the quantities corresponding to \eqref{max1}, \eqref{max1capital},
respectively.

We are interested in asymptotic properties of $W_{i,j}$ as $|j-i| \to \infty$,
that is, a LLN and a CLT.
Despite the fact that there are $O(2^n)$ paths in $\Pi_{i,j}(1)$ when
$|j-i|=n$, the random weights are so highly correlated that we have
a linear asymptotic growth rate as $n \to \infty$, provided that
$\max (0,u)$  has a second moment and $\min (u,0)$ and $v$ a first.

A quick explanation  of this fact  
 is via an extended version
of the subadditive ergodic theorem.
Let $\WW_{i,j}$ be a related quantity, obtained by replacing
the maximization over all paths between two vertices in the segment $[i,j]$.
In other words,
\begin{equation}
\label{max2}
\WW_{i,j} = \max_{x,y \in [i,j]} W_{x,y}.
\end{equation}
It turns out that the value of $\WW_{i,j}$ is $W_{i,j}$ plus something
of  order  $o(n)$ as $n=|j-i| \to \infty$. This is partly due to
the fact (proved below, but also found in \cite{DFK12} and \cite{FMS})
that $i$ and $j$ are eventually connected with probability $1$.
It is clear that $(\WW_{i,j})_{i<j}$ is stationary, that is,
\[
(\WW_{i,j})_{i<j} \eqdist (\WW_{i+1,j+1})_{i<j}.
\]
In addition,
\[
\WW_{i,k}\le \WW_{i,j}+ \WW_{j,k} + \max_{i\le x\le j \le y\le k}
(v_x+u_{x,y})^+.
\]
An estimate for the first moment of the latter maximum shows
that it is finite iff the first moment of $v$ and the second moment of $u$
are finite.
An extended version of the subadditive ergodic theorem shows that
$\lim_{n \to \infty} W_{0,n}/n$ exists a.s., and, owing to ergodicity,
that it is a.s.\ equal to a constant (that can be seen to be positive).
Although this can provide a proof for the law of large numbers,
and, in fact, in a context much more general than the one considered here,
it gives no information about second-order properties.
So we bypass this avenue and consider instead discovering regenerative 
properties, as done in previous work.
The difference here is that edge-disjoint paths have correlated weights
(if they share common vertices) but we will see that this
does not complicate things much.

\section{Asymptotic growth}
We make the following assumptions concerning $(u,v)$:
\[
\P(v\ge 0)=1, \, \E v < \infty,\, \E u >0, \, \E\max (0,u)^2 < \infty.
\tag{A}\label{A}
\]
\label{secag}
This section is devoted to the proof of the following theorem.
\begin{theorem}
\label{thmLLN}
Consider the weighted random graph $\sog(\Z,p,u,v)$ with $0<p\le 1$,
and assume that conditions \eqref{A} 
hold.
Let $W_{i,j}$, $\WW_{i,j}$ be the values of the two optimization
problems \eqref{max1} and \eqref{max2}, respectively.
Then there is a constant $C>0$ such that
\[
\lim \frac{W_{i,j}}{j-i} = \lim \frac{\WW_{i,j}}{j-i} = C \text{ a.s.},
\]
as $j \to \infty$ or as $i \to -\infty$.
\end{theorem}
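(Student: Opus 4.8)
The strategy is to establish the two limits separately and then reconcile them. For $\WW_{0,n}$ the result is almost immediate: the excerpt already observes that $(\WW_{i,j})_{i<j}$ is stationary, that it satisfies the superadditivity-type bound
\[
\WW_{i,k}\le \WW_{i,j}+\WW_{j,k}+\max_{i\le x\le j\le y\le k}(v_x+u_{x,y})^+,
\]
and that under \eqref{A} the first moment of the correction term is finite. So I would first carefully verify this last moment estimate (this is where the hypothesis $\E\max(0,u)^2<\infty$ is used, together with $\E v<\infty$: the maximum of roughly $n^2$ i.i.d.\ copies of $(v+u)^+$ over a triangle of side $n$ has expectation $o(n)$ precisely under a second-moment assumption), and then invoke an extended (Kingman-type) subadditive ergodic theorem to get $\WW_{0,n}/n\to C$ a.s.\ and in $L^1$ for a deterministic constant $C$; ergodicity of the shift on the i.i.d.\ environment gives that the limit is constant, and a one-line lower bound (e.g.\ the weight of the path $0<1<\cdots<n$ when all those edges are present, which happens along a positive-density subsequence, or simply $\E v>0$ forces $C\ge$ something positive, combined with $\E u>0$) shows $C>0$. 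The same argument run with $i\to-\infty$ instead of $j\to+\infty$ is identical by stationarity.

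The real content is showing $W_{0,n}/n$ has the \emph{same} limit $C$, i.e.\ that $\WW_{0,n}-W_{0,n}=o(n)$ a.s. Since trivially $W_{0,n}\le\WW_{0,n}$, only the lower bound $W_{0,n}\ge\WW_{0,n}-o(n)$ needs work. The idea, as hinted in the excerpt, is that the optimizing pair $(x,y)$ realizing $\WW_{0,n}=W_{x,y}$ can be joined to the endpoints $0$ and $n$ cheaply: one connects $0$ to $x$ and $y$ to $n$ along \emph{some} path, accepting whatever (possibly negative) weight that costs. So the key lemma is a connectivity/cost estimate: with probability one, for all large $n$, any vertex $x\in[0,n]$ is connected to $0$ by a path whose length and whose total weight are both $o(n)$ (and symmetrically $y$ to $n$). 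Connectivity itself follows from a Borel–Cantelli argument — the probability that $0$ fails to connect to $x$ decays geometrically in the number of "layers" between them, a fact the authors say is proved below and also appears in \cite{DFK12,FMS}. The weight of such a connecting path is a sum of $O(\log n)$ many terms of the form $v+u$ with the negative part of $u$ possibly heavy-tailed; controlling its negative part uniformly over all $x\in[0,n]$ is the delicate point, but a short-path (logarithmic-length) construction plus a union bound over the $n$ choices of $x$ keeps this $o(n)$, using only $\E|u|<\infty$-type input on the relevant scale. (The positive part is handled by the same second-moment bound already invoked for $\WW$.)

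Putting it together: choose the maximizer $(x,y)$ for $\WW_{0,n}$; prepend a short cheap path $0\rightsquigarrow x$ and append a short cheap path $y\rightsquigarrow n$; the resulting path $\pi\in\Pi_{0,n}(p)$ has $w(\pi)\ge W_{x,y}-o(n)-o(n)=\WW_{0,n}-o(n)$, hence $W_{0,n}\ge w_{0,n}\ge\WW_{0,n}-o(n)$, and since $W_{0,n}=w_{0,n}^+$ and $\WW_{0,n}/n\to C>0$ we may drop the positive part for large $n$. Dividing by $n$ and using the already-established $\WW_{0,n}/n\to C$ finishes it; the $i\to-\infty$ statement is again identical by stationarity.

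**Main obstacle.** The subadditive-ergodic-theorem step is routine once the moment bound on the correction term is in hand. The genuine difficulty is the uniform-in-$x$ control of the \emph{cost} of the short connecting paths when $u$ has a heavy negative tail — we must ensure $\max_{x\in[0,n]}$ of (length of cheapest short $0\rightsquigarrow x$ path)-type weight sums is $o(n)$ a.s., which requires balancing the logarithmic path length against a union bound over $n$ endpoints and the tail of the negative part of $u$. This is exactly the point where the hypothesis is tuned so that only $\E\min(u,0)>-\infty$ (first moment of the negative part) is needed, not a second moment, and getting the bookkeeping right is the crux of the proof.
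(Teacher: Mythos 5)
Your route is genuinely different from the paper's. The paper explicitly mentions the subadditive-ergodic-theorem avenue you take for $\WW_{0,n}$ and then \emph{bypasses} it (``it gives no information about second-order properties''), opting instead to build a regenerative structure: first the connectivity-based skeleton points $\SS$, then the $c$-renewal points $\RR_c$, which are skeleton points at which, in addition, the weights $\widehat W_{i,i\pm n}$ stay above $cn$ and no single edge straddling $i$ has weight $\geq c(m+n)$. At an $\RR_c$-point the optimal path is forced to pass through, giving the exact splitting \eqref{split}, and $W_{0,n}$ becomes a sum of i.i.d.\ increments, so the LLN is ordinary renewal theory with $C = \lambda^{-1}\E[W_{\tau_1,\tau_2}]$. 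This machinery is set up precisely so that the CLT (Theorem~\ref{thm2}) comes for free afterward; your subadditive argument does not give that.

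More importantly, there is a genuine gap in your comparison step. Your key lemma --- ``with probability one, for all large $n$, any vertex $x\in[0,n]$ is connected to $0$ by a path whose length and weight are both $o(n)$'' --- is false as stated: $\P(0\text{ connects to }1)=p<1$, independently of $n$, so there is a fixed positive-probability event on which $0$ never connects to $1$. Eventual connectivity holds only beyond a random threshold, and nothing a priori prevents the optimizer $(x,y)$ for $\WW_{0,n}$ from sitting in the non-connected fringe near $0$ or near $n$. One would have to route through a nearby skeleton point and absorb what is lost near the ends, but then the very problem you flag as the ``crux'' --- bounding the (possibly large negative) weight of the detour when $u^-$ has no second moment, uniformly over the boundary region --- is not resolved; you observe that the bookkeeping must be done but do not do it. The paper sidesteps both issues at once: the events $A_i^\pm$ and $A_i^{-+}$ defining $\RR_c$ build the required weight control directly into the regeneration points, so no after-the-fact patching of paths is needed, and the hypothesis $\E u^-<\infty$ enters only through the finiteness of $\E Y_i \le \tfrac12\E(\sigma_2-\sigma_1)^2\,\E u^-$ in Lemma~\ref{lem3}. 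If you want to pursue the subadditive route, you would need to (i) replace your connectivity lemma by a statement about the first/last skeleton points $\sigma,\sigma'$ in $[0,n]$ and show that the maximizer's fringe contribution outside $[\sigma,\sigma']$ is $o(n)$ a.s., and (ii) supply the union-bound estimate controlling the negative weight of the detour paths, using only a first moment on $u^-$. Both are plausible but neither is done.
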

The method followed is that of exhibiting a regenerative structure
of a doubly-indexed process.
First, to fix ideas and notation, we define what we mean by
this term.

\begin{definition}
\label{defreg}
Let $\chi=(\chi_{i,j})_{i,j \in \Z, i<j}$ be an array of random elements 
defined on a common probability space $(\Omega, \FF,P)$,
and let $(A_i)_{i \in \Z}$ be a sequence of events.
Consider the random integers 
\[
\NN:=\{i:\, \1_{A_i}=1\}
\]
(the points $i$ such that $A_i$ occurs)
and enumerate them in some $\omega$-independent way. (For example,
let $\iota_1$ be the first $i > 0$ such that $\1_{A_i}=1$
and $\iota_0$ be the greatest $i \le 0$ such that $\1_{A_i}=1$
and enumerate the remaining points following their natural order.)
We then say that $\chi$ regenerates over $(A_i)$ (or over $\NN$) if
$(\chi_{i,j}:\, \iota_{r-1} \le i < j \le \iota_r)$, $r \in \Z$, are
independent (with the proper modification if the set  $\NN$ is finite).
\end{definition}
Once we have this definition in mind, the constructions below
will be clear. In fact we shall consider two sequences of events, the
skeleton points (denoted by $\mathcal S$) and the $c$-renewal points 
(denoted by $\mathcal R_c$). A third sequence will be considered in
the next section.
In Definition \ref{defreg} notice that if, in addition,
$\NN$ and $\chi$ are jointly stationary
then $\NN$ itself forms a stationary renewal process.
 
Define first the {\bf skeleton points}.\footnote{The terminology is from \cite{DFK12}. In \cite{FMS} the same points are called 
``strongly connected points''.} These depend only on connectivity
and not on weights. We say that $i$ is a skeleton point if it
connects to every point to the left and to the right:
\[
\SS=\{i \in\Z:\, \text{ there is a path between $i$ and any $j>i$
and between any $k<i$ and $i$}\}.
\]
As shown in \cite{DFK12},
\begin{lemma}
\begin{equation}
\label{skelrate}
\gamma:=
\P(i \text{ is a skeleton point})
= \prod_{k=1}^\infty (1-(1-p)^k)^2>0.
\end{equation}
\end{lemma}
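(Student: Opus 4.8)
The statement asserts that the probability that a fixed integer $i$ is a skeleton point equals $\prod_{k=1}^\infty(1-(1-p)^k)^2$, and that this is strictly positive. By translation invariance of $\sog(\Z,p)$ the probability does not depend on $i$, so I fix $i=0$. The event "$0$ is a skeleton point" is the intersection of two events: $R = \{0$ connects to every $j>0\}$ and $L = \{$every $k<0$ connects to $0\}$. These two events depend on disjoint collections of Bernoulli variables — $R$ on the $\alpha_{r,s}$ with $0\le r<s$ in $\{0,1,2,\dots\}$, and $L$ on the $\alpha_{r,s}$ with $r<s$ in $\{\dots,-2,-1,0\}$ — hence they are independent, and by the obvious symmetry (reverse the order of $\Z$) they have equal probability. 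So it suffices to show $\P(R)=\prod_{k=1}^\infty(1-(1-p)^k)$ and that this product is positive.

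**Key step: computing $\P(R)$.** The clean way is to introduce the events $B_j = \{0$ connects to $j\}$ for $j\ge 1$ and observe $R=\bigcap_{j\ge1}B_j$. I would show by induction that $\P(B_1\cap\cdots\cap B_n) = \prod_{k=1}^n(1-(1-p)^k)$. The induction step rests on the following observation: condition on the restriction of the graph to $\{0,1,\dots,n-1\}$ on the event $B_1\cap\cdots\cap B_{n-1}$; then there exists at least one path from $0$ to $n-1$, so in particular every vertex that lies on such a path connects to $n-1$... actually the cleanest route is different. Let me instead use: $0$ connects to $n$ iff at least one of the edges $(r,n)$, $0\le r\le n-1$, is present \emph{and} $0$ connects to that $r$. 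The subtlety is that "$0$ connects to $r$" for various $r$ are dependent events, so I cannot just multiply. The standard trick (as in \cite{DFK12}) is to reverse time: look at the event that $n$ "connects backwards" to $0$, and peel off the leftmost vertex. Concretely, let $T_n = \{0$ reaches $n$ in the graph $\sog(\{0,\dots,n\},p)\}$; removing vertex $0$, the vertex $0$ reaches $n$ iff $0$ has an outgoing edge to some vertex in $\{1,\dots,n\}$ from which $n$ is reached. Since in $\sog(\{1,\dots,n\},p)$ the event "$n$ is reached from the set $\{1,\dots,n\}$" — hmm. The genuinely slick argument: in $\sog(\{0,1,\dots,n\},p)$, condition on the \emph{entire graph on} $\{1,\dots,n\}$; the event $\{0\to n\}$ occurs iff $0$ has an edge to some vertex in the random set $S_n := \{r\in\{1,\dots,n\}: r\to n\} \cup \{n\}$. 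Given $|S_n|=m$, this conditional probability is $1-(1-p)^m$. And one shows $n\in S_n$ always, $1\in S_n$ with the right probability, etc. — in fact the distribution of $|S_n|$ can be pinned down recursively, and summing gives the product formula. I would present this recursion carefully: define $a_n = \P(0\to n)$, track $q_n := \P(0\not\to n)$, and derive $q_{n} = \sum_{m} \P(|S_n|=m)(1-p)^m$; combined with the companion recursion for the $S_n$ this telescopes to $\prod_{k=1}^n(1-(1-p)^k)$.

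**Positivity of the infinite product.** Once the finite product is identified, $\gamma = \big(\prod_{k=1}^\infty(1-(1-p)^k)\big)^2$. Since $0<p\le 1$ we have $0\le 1-p<1$, so $\sum_{k\ge1}(1-p)^k<\infty$, and therefore $\prod_{k\ge1}(1-(1-p)^k)$ converges to a strictly positive limit (a convergent product with factors in $(0,1)$ is positive iff the sum of $1$ minus the factors converges). Hence $\gamma>0$. One also must note $B_1\cap\cdots\cap B_n \downarrow R$ as $n\to\infty$, so $\P(R)=\lim_n \P(B_1\cap\cdots\cap B_n)$ by continuity of measure, justifying passing to the infinite product.

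**Main obstacle.** The only real difficulty is the dependence issue in the induction: "$0$ connects to $j$" for different $j$ are correlated, so the product formula is not a naive product of independent events. The resolution is the reverse-exploration / peeling argument above, and making that recursion airtight — correctly identifying the conditional law of the "reachable-from-the-new-vertex" set and checking it telescopes — is where the care is needed. Everything else (independence of $L$ and $R$, the left-right symmetry, convergence of the product) is routine.
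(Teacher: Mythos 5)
Your setup is correct and matches the paper: fix $i=0$ by translation invariance, split the skeleton event into the ``right'' event $R=\{0\text{ connects to every }j>0\}$ and the ``left'' event $L$, note that these depend on disjoint (hence independent) families of $\alpha$'s and have equal probability by symmetry, and deduce positivity of the infinite product from $\sum_k(1-p)^k<\infty$. The gap is in the computation of $\P(R)$. You correctly set the target $\P(B_1\cap\cdots\cap B_n)=\prod_{k=1}^n(1-(1-p)^k)$ and even began the right conditioning argument, but then abandoned it and sketched a recursion for $a_n:=\P(0\to n)$ via the reachable set $S_n$. That recursion goes after the wrong quantity: the finite product $\prod_{k=1}^n(1-(1-p)^k)$ is \emph{not} $\P(0\to n)$ but $\P\bigl(\bigcap_{j\le n}B_j\bigr)$. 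At $n=2$, $\P(0\to 2)=p+(1-p)p^2$ while $p(1-(1-p)^2)=p(2p-p^2)$; for $p=\tfrac12$ these are $\tfrac58$ and $\tfrac38$. So $q_n=\E[(1-p)^{|S_n|}]$ does not telescope to $1-\prod_{k\le n}(1-(1-p)^k)$ and no ``companion recursion'' for $|S_n|$ can rescue it.

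The idea you abandoned is the one that works, once you read the conditioning event correctly: on $B_1\cap\cdots\cap B_{n-1}$ \emph{every} vertex of $\{0,\dots,n-1\}$ is reachable from $0$, not merely vertices lying on one path to $n-1$. Conditionally on the graph on $\{0,\dots,n-1\}$ given this event, $B_n$ holds iff $n$ receives an edge from at least one of $0,\dots,n-1$, which depends only on the fresh edges $(r,n)$, $0\le r<n$, independent of the conditioning; hence $\P(B_n\mid B_1\cap\cdots\cap B_{n-1})=1-(1-p)^n$ and the product telescopes. The paper bypasses even the conditioning by introducing the first-left connection variables $g_j:=\min\{k\ge 1:\alpha_{j-k,j}=1\}$, $j\ge 1$, which are i.i.d.\ geometric$(p)$ because distinct $j$ involve disjoint families of edges, and observing the logical equivalence $\bigcap_{j=1}^n B_j=\{g_1\le 1,\dots,g_n\le n\}$; then $\P(R)$ is an immediate product of independent probabilities. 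Carried out as you began it, your argument is essentially equivalent to this; as actually written, it does not prove the formula.
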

\begin{proof}
For each $j \in \Z$, let $g_j$ be the distance from $j$ of the first $i < j$
such that $\alpha_{i,j}=1$.
We refer to $g_j$ as the \underline{first-left connection variable}.
Then $g_j$ is a geometric random variable with parameter $p$,
\[
\P(g_j > k) = (1-p)^k,
\]
and the $g_j$ are independent when $j$ runs over $\Z$.
We now notice the logical equivalence
\begin{equation}
\label{cruc}
0 \text{ connects to every $i$ in $\{1,\ldots,n\}$}
\iff g_1\le 1,\, g_2\le 2,\ldots,\, g_n \le n.
\end{equation}
Hence
\[
\P(0 \text{ connects to every $i > 0$})
= \prod_{k=1}^\infty \P(g_k \le k) 
= \prod_{k=1}^\infty (1-(1-p)^k).
\]
But for $0$ to be a skeleton point we need that it connects to
every point to its right and to its left. Hence $\gamma$ is the
square of the last quantity.
Finally, recall that for $a_k \in (0,1)$,
$\prod_{k=1}^\infty (1-a_k) > 0$ iff
$\sum_{k=1}^\infty a_k < \infty$, and this proves that $\gamma > 0$.
\end{proof}
In particular, we deduce that $\SS$ is an a.s.\ infinite random subset of $\Z$.
It is clear that it forms a stationary and ergodic point process.
What is not immediately clear is that 
\begin{lemma}
\label{lemS}
$\SS$ forms a stationary renewal process and 
$\chi=(\alpha_{i,j}, u_{i,j}, v_i)_{i<j}$ regenerates over $\SS$ in the sense of
Definition \ref{defreg}.
\end{lemma}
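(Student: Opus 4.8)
The plan is to show that although membership in $\SS$ depends a priori on edges arbitrarily far to the left and to the right, it can nevertheless be decided block by block, from data local to each inter-skeleton interval; this makes the blocks independent and gives regeneration.

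First I would restate $i\in\SS$ in terms of the first-connection variables used above. By \eqref{cruc} (applied with an arbitrary right endpoint and passed to the limit) and its left--right mirror (reverse the order of $\Z$, under which $\sog(\Z,p)$ is invariant in law), one has $i\in\SS$ iff $R_i\cap L_i$ holds, where $R_i=\{i\text{ connects to every }j>i\}=\bigcap_{j>i}\{\exists\,l\in[i,j):\ \alpha_{l,j}=1\}$ depends only on the edges with left endpoint $\ge i$, and $L_i=\{\text{every }k<i\text{ connects to }i\}=\bigcap_{k<i}\{\exists\,l\in(k,i]:\ \alpha_{k,l}=1\}$ depends only on the edges with right endpoint $\le i$. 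Truncating the intersections gives, for $m<n$, the local events $R^{\le n}_m=\bigcap_{m<j\le n}\{\exists\,l\in[m,j):\ \alpha_{l,j}=1\}$ and $L^{\ge m}_n=\bigcap_{m\le k<n}\{\exists\,l\in(k,n]:\ \alpha_{k,l}=1\}$, both measurable with respect to the $\sigma$-algebra $\mathcal A_{m,n}$ generated by the $\alpha$'s having both endpoints in $[m,n]$.

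The key observation is an \emph{absorption identity}. For $m<n$, $R_m\cap R_n=R^{\le n}_m\cap R_n$ and $L_m\cap L_n=L_m\cap L^{\ge m}_n$, because the conditions in $R_m$ for $j>n$ are implied by $R_n$ and those in $L_n$ for $k<m$ are implied by $L_m$. Moreover, on $L_m\cap R_n$ the event $\{\SS\cap(m,n)=\varnothing\}$ is a purely local event $G_{m,n}\in\mathcal A_{m,n}$: for $m<t<n$, on $L_m$ the event $L_t$ holds iff $L^{\ge m}_t$ holds and on $R_n$ the event $R_t$ holds iff $R^{\le n}_t$ holds, so $\{\SS\cap(m,n)=\varnothing\}=\bigcap_{m<t<n}\big((L^{\ge m}_t)^c\cup(R^{\le n}_t)^c\big)$. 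Applying these identities along the whole (a.s.\ bi-infinite) sequence of skeleton points $\dots<s_{-1}<s_0\le0<s_1<\dots$ collapses $\{\SS=\{s_k\}_{k\in\Z}\}$ to $\bigcap_{k\in\Z}B_{s_{k-1},s_k}$, with $B_{m,n}:=R^{\le n}_m\cap L^{\ge m}_n\cap G_{m,n}\in\mathcal A_{m,n}$. Since consecutive blocks $[s_{k-1},s_k]$ share only a single vertex, hence no edges, the $B_{s_{k-1},s_k}$ are independent, and each is measurable with respect to the $\sigma$-algebra $\mathcal D_k$ generated by all of $(\alpha_{i,j},u_{i,j},v_i)$ with indices strictly inside the $k$-th block; the $\mathcal D_k$ are independent.

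On $\{\SS=\{s_k\}\}$ the $r$-th block datum $\chi^{(r)}=(\chi_{i,j})_{s_{r-1}\le i<j\le s_r}$ is $\mathcal D_r$-measurable while the conditioning event is a product $\bigcap_k B_{s_{k-1},s_k}$ of $\mathcal D_k$-measurable events, so the standard computation $\E\big[\prod_r f_r(\chi^{(r)})\mid\SS=\{s_k\}\big]=\prod_r\E\big[f_r(\chi^{(r)})\mid B_{s_{r-1},s_r}\big]$ yields conditional independence of the blocks, with the law of $\chi^{(r)}$ given $\SS$ depending only on the gap $s_r-s_{r-1}$ by translation invariance. This is precisely the statement that $\chi$ regenerates over $\SS$ in the sense of Definition \ref{defreg}; and since $\chi$ determines $\SS$ and the pair is jointly stationary, the observation recorded just after Definition \ref{defreg} then gives that $\SS$ is a stationary renewal process. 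The one delicate point is that $\{\SS=\{s_k\}\}$ has probability zero, so the conditioning has to be made rigorous --- cleanly, one conditions instead on $0\in\SS$ together with the positions of finitely many skeleton points on either side of $0$, runs the same decomposition inside that finite window, and lets the window grow (equivalently, works under the Palm measure of $\SS$). I expect this limiting/Palm bookkeeping, rather than the combinatorial decomposition, to be the main thing that needs care.
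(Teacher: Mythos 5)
Your proof is correct and rests on the same core observation as the paper's sketch: the first-connection-variable characterization of membership in $\SS$. What you call $R_i$ and $L_i$ are exactly the paper's $B_i^+$ and $B_i^-$, written in first-right and first-left connection form respectively, and your absorption identity $R_m\cap R_n=R_m^{\le n}\cap R_n$ (resp.\ $L_m\cap L_n=L_m\cap L_n^{\ge m}$) is precisely the paper's key point that, on $B_i$, the $\SS$-points to one side of $i$ are determined by $\chi$ on that side; the argument the paper gives (``if $B_0$ occurs then the event that the first $\SS$-point to the left of $0$ is located at $k<0$ is the event $\{g_{k+1}\le1,\dots,g_0\le|k|\}$'') is the single-point instance of your identity.

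Where you go further is in iterating this along the whole skeleton sequence to obtain the explicit global block factorization $\{\SS=\{s_k\}_k\}=\bigcap_k B_{s_{k-1},s_k}$ with $B_{m,n}\in\mathcal A_{m,n}$, so that both the conditioning event and the block data $\chi^{(r)}$ are carried by the disjoint $\sigma$-algebras $\mathcal D_k$. The paper contents itself with the local version (independence of past and future given $B_i$, plus one-sided measurability of $\SS\cap(i,\infty)$ and $\SS\cap(-\infty,i)$ on $B_i$) and leaves the iteration implicit; your formulation makes the renewal structure and the regeneration in Definition~\ref{defreg} follow at one stroke. I'd only note two small points. First, make the telescoping explicit: to pass from $\bigcap_k R_{s_k}$ to $\bigcap_k R_{s_{k-1}}^{\le s_k}$ you use that $\{s_k\}$ is bi-infinite, which holds a.s.\ by the positive density $\gamma>0$; a one-sided or finite $\SS$ would leave an uncontrolled tail event. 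Second, your formula $\E[\prod_r f_r(\chi^{(r)})\mid\SS=\{s_k\}]=\prod_r\E[f_r(\chi^{(r)})\mid B_{s_{r-1},s_r}]$ is formally a ratio of two zero quantities; the clean fix is exactly what you suggest (finite window / Palm measure), and this is indeed the part the paper's ``Sketch of proof'' label is quietly excusing. Subject to that standard bookkeeping, the proposal is sound.
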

\begin{proof}[Sketch of proof]
Let $B_i$ be the event that there is a path from $i$ to any $j >i$
and from any $k < i$ to $i$. Recall that $\SS=\{i:\, \1_{B_i}=1\}$.
The {first thing} to prove is that, conditional on $B_i$, the 
future of $\chi$ after $i$ is independent of the past before $i$.
The  {second thing} to prove is that on $B_i$ 
the $\SS$-points to the right of $i$ are  completely determined by the future
of $\chi$ after $i$. Similarly from the past.
The crucial observation in proving these assertions is that
the event that $i$ is connected to every point $j \in [i+1,n]$
is determined by first-left connection variables; see \eqref{cruc}
Recall that the first left-connection variable $g_i$ is the smallest $k$
such that $(i-k, i)$ is an edge. 
Then the event that $i$ is connected to every point $j \in [i+1,i+n]$
is the event
\[
g_{i+1} \le 1, g_{i+2} \le 2,\ldots, g_{i+n} \le n.
\]
If we let $B_i^+$ be the event that $i$ is connected to every point to
its right then 
\[
B_i^+ = \{g_{i+1} \le 1, g_{i+2} \le 2,\ldots\}
\]
Therefore, if $B_0$ occurs then the event that the first $\SS$-point to the
left of $0$ is located at $k<0$ is the event 
$\{g_{k+1}\le 1,\ldots,g_0 \le |k|\}$ which is completely determined
by the past before $0$.
\end{proof}
\begin{corollary}
\label{coro1}
$(W_{i,j})_{i<j}$ regenerates over $\SS$.
\end{corollary}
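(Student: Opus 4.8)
The plan is to deduce the corollary directly from Lemma~\ref{lemS} together with the \emph{locality} of the optimization problem~\eqref{max1}.

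The one observation that does the work is this: for every pair $i<j$ the quantity $W_{i,j}$ is a measurable function \emph{only} of the finite sub-array $(\chi_{x,y})_{\,i\le x< y\le j}$, and this function is translation covariant, i.e.\ depends on the pair $(i,j)$ only through the relative configuration (in particular only through $j-i$). Indeed, any $\pi=(i_0,\dots,i_\ell)\in\Pi_{i,j}(p)$ is increasing with $i_0=i$ and $i_\ell=j$, so all its vertices lie in $\{i,i+1,\dots,j\}$; whether such a $\pi$ is a path is read off from the variables $\alpha_{i_{r-1},i_r}$, and its weight~\eqref{pathweight} involves only the $v_x$ with $i\le x<j$ and the $u_{x,y}$ with $i\le x<y\le j$. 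Taking $\sup$ over $\Pi_{i,j}(p)$ (with $\sup\varnothing=-\infty$, so that $W_{i,j}=0$ when $i$ and $j$ are not connected) and then the positive part preserves measurability; and since the edge/weight variables and the path-weight formula are unchanged under a common shift of all indices, the dependence is translation covariant.

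Now let $\cdots<\iota_{-1}<\iota_0\le 0<\iota_1<\iota_2<\cdots$ enumerate $\SS$ as in Definition~\ref{defreg} (note the defining events here are the $B_i$, which depend only on $\chi$, so $\SS$ and its enumeration are the same whether we view the underlying array as $\chi$ or as $W=(W_{i,j})$). Fix $r\in\Z$. Since $\{(x,y):i\le x<y\le j\}\subseteq\{(x,y):\iota_{r-1}\le x<y\le\iota_r\}$ for every $i,j$ with $\iota_{r-1}\le i<j\le\iota_r$, the previous paragraph shows that the block $(W_{i,j})_{\,\iota_{r-1}\le i<j\le\iota_r}$ is one fixed measurable function (the same function for every $r$, by translation covariance) of the single block $(\chi_{x,y})_{\,\iota_{r-1}\le x<y\le\iota_r}$. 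By Lemma~\ref{lemS} the blocks $(\chi_{x,y})_{\,\iota_{r-1}\le x<y\le\iota_r}$, $r\in\Z$, are independent; hence so are their images, the blocks $(W_{i,j})_{\,\iota_{r-1}\le i<j\le\iota_r}$, $r\in\Z$. By Definition~\ref{defreg} this is exactly the assertion that $(W_{i,j})_{i<j}$ regenerates over $\SS$ (the $\omega$-independent enumeration, and the obvious modification when $\SS$ is finite, are inherited verbatim from Lemma~\ref{lemS}).

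There is essentially no obstacle here beyond the measurability bookkeeping: the entire content is the locality/translation-covariance observation of the second paragraph, after which the corollary is just the statement that a measurable image of an independent family is an independent family. The identical argument applies verbatim to $(\widehat W_{i,j})$ and $(W^0_{i,j})$; moreover, since joint stationarity of $(\chi,\SS)$ transfers to $((W_{i,j}),\SS)$, the point process $\SS$ remains a stationary renewal process relative to the array $W$ as well.
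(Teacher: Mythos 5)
Your proof is correct and fills in exactly the argument the paper leaves implicit: $W_{i,j}$ depends only on the finite block $(\chi_{x,y})_{i\le x<y\le j}$, so block-independence (regeneration) of $\chi$ over $\SS$ from Lemma~\ref{lemS} transfers to $W$ by the standard fact that measurable functions of independent families are independent. The locality and translation-covariance observations are the right ones, and the remark that $\SS$ itself is a function of $\chi$ alone (so the enumeration is unambiguous) correctly addresses the only subtlety.
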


To gain some intuition about the general case, 
we look at the special case of the graph $\sog(\Z,p)=\sog(\Z,p,1,0)$
and show that the asymptotic growth of the maximal path length follows
from  Lemma \ref{lemS} and Corollary \ref{coro1}.
\begin{proposition}[Special case of Theorem \ref{thmLLN}]
\label{speLLN}
Let $W_{i,j}^0$ be the value of the maximization problem \eqref{max1}
for $\sog(\Z,p,1,0)$, i.e., when $u=1$, $v=0$, a.s.
Then, as $j \to \infty$ or $i \to -\infty$,
\[
W^0_{i,j}/(j-i) \to C^0(p) \text{ a.s.},
\]
for some $C^0(p)>0$.
\end{proposition}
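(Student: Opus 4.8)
The plan is to exploit the regenerative structure provided by the skeleton points $\SS$ together with a version of Kingman's subadditive ergodic theorem adapted to the renewal (rather than merely stationary) setting. First I would fix an arbitrary enumeration $\cdots < \sigma_{-1} < \sigma_0 \le 0 < \sigma_1 < \sigma_2 < \cdots$ of $\SS$, which by Lemma~\ref{lemS} is a stationary renewal process with finite intensity $\gamma>0$, so that $\sigma_r/r \to 1/\gamma$ a.s.\ as $r\to\infty$. By Corollary~\ref{coro1} the blocks $(W_{i,j})_{\sigma_{r-1}\le i<j\le\sigma_r}$ are independent across $r$; moreover, any path from a vertex at or before $\sigma_{r-1}$ to a vertex at or after $\sigma_r$ must pass through each of the skeleton points $\sigma_{r-1},\dots,\sigma_r$ (this is precisely what ``connects to everything on both sides'' buys us), so $W^0$ is \emph{additive} across consecutive skeleton points: $W^0_{\sigma_0,\sigma_m} = \sum_{r=1}^{m} W^0_{\sigma_{r-1},\sigma_r}$, and in general $W^0_{\sigma_k,\sigma_m} = \sum_{r=k+1}^m W^0_{\sigma_{r-1},\sigma_r}$. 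Here I use that for $u=1,v=0$ the weight of a path is its number of edges, which is nonnegative, so $W^0 = (w^0)^+ = w^0$ and no annoying positive-part truncation interferes with additivity.

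Next I would apply the strong law of large numbers for i.i.d.\ summands to the sequence $X_r := W^0_{\sigma_{r-1},\sigma_r}$, $r\ge1$, which are i.i.d.\ by the regeneration property and identically distributed by stationarity of the marked renewal process. Integrability is the one genuine thing to check: since $u=1$, the length of the longest path between two skeleton points at spacing $\sigma_r-\sigma_{r-1}$ is trivially bounded by $\sigma_r-\sigma_{r-1}$, and the interrenewal times of a renewal process with positive intensity have finite mean, so $\E X_1 \le \E(\sigma_1-\sigma_0) < \infty$; in particular $\E X_1 =: \mu \in (0,\infty)$ ($\mu>0$ because there is at least one edge, hence a path of length $\ge1$, between consecutive skeleton points with positive probability). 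Hence $\frac1m\sum_{r=1}^m X_r \to \mu$ a.s., i.e.\ $W^0_{\sigma_0,\sigma_m}/m \to \mu$. Combining with $\sigma_m/m\to1/\gamma$ gives $W^0_{\sigma_0,\sigma_m}/\sigma_m \to \gamma\mu =: C^0(p)$ along the (random) subsequence of skeleton points.

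To upgrade this to convergence along \emph{all} $j\to\infty$, I would sandwich: for arbitrary $n$, let $\sigma_{k(n)}$ be the largest skeleton point $\le n$ and $\sigma_{k(n)+1}$ the smallest skeleton point $>n$; monotonicity of $W^0_{0,\cdot}$ in the right endpoint (adding vertices can only enlarge the path collection) gives $W^0_{0,\sigma_{k(n)}} \le W^0_{0,n} \le W^0_{0,\sigma_{k(n)+1}}$, while $\sigma_{k(n)}/n\to1$ and $\sigma_{k(n)+1}/n\to1$ a.s.\ because the gap between consecutive skeleton points is $o(n)$ (a standard consequence of $\sigma_m/m\to1/\gamma$), and similarly I absorb the contribution of the at most $\sigma_1-\sigma_0$ initial vertices between $0$ and $\sigma_{k}$ where $\sigma_k$ is the first skeleton point after $0$. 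This yields $W^0_{0,n}/n \to C^0(p)$; the case $i\to-\infty$ is symmetric by the left-right symmetry of the construction, and shifting the starting vertex from $0$ to a general $i$ changes nothing by stationarity. Finally, $C^0(p)>0$ follows from $\mu>0$ and $\gamma>0$.

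The main obstacle, and the only place requiring care, is the passage from convergence along skeleton points to convergence along all integers, i.e.\ controlling the undershoot/overshoot $\sigma_{k(n)+1}-\sigma_{k(n)}$ and the initial segment $[0,\sigma_1]$; everything else is either the i.i.d.\ SLLN or the renewal SLLN, both standard. One should also note that, unlike the general Theorem~\ref{thmLLN}, here there is no integrability subtlety coming from heavy-tailed edge weights — with $u\equiv1$ the additive blocks are automatically dominated by interrenewal times — so this proposition is genuinely the ``easy case'' that motivates the general argument to follow.
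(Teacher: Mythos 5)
Your proof is correct and follows essentially the same route as the paper's own (sketch of) proof: exploit the regenerative splitting $W^0_{i,j}=W^0_{i,\sigma}+W^0_{\sigma,j}$ across skeleton points, apply the i.i.d.\ SLLN plus the renewal rate $\sigma_m/m\to 1/\gamma$, and identify $C^0(p)=\gamma\,\E W^0_{\sigma_1,\sigma_2}>0$; your treatment of the boundary terms (initial segment and overshoot) merely spells out what the paper leaves implicit. One small imprecision worth fixing: $W^0_{0,\cdot}$ is \emph{not} in general monotone in the right endpoint (if $n$ is unreachable from $0$ then $W^0_{0,n}=0$ irrespective of $W^0_{0,n-1}$), so the parenthetical ``adding vertices can only enlarge the path collection'' does not by itself justify the sandwich; the inequalities you actually need, $W^0_{0,\sigma_{k(n)}}\le W^0_{0,n}\le W^0_{0,\sigma_{k(n)+1}}$, do hold, but the correct reason is that $\sigma_{k(n)}$ and $\sigma_{k(n)+1}$ are skeleton points, so any optimal path to $\sigma_{k(n)}$ can be extended onward to $n$, and any optimal path to $n$ can be extended onward to $\sigma_{k(n)+1}$.
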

\begin{proof}[Sketch of proof]
If $\sigma \in \SS$ and $i \le \sigma \le j$, then, necessarily,
\begin{equation}
\label{Ssplit}
W^0_{i,j} = W^0_{i,\sigma} + W^0_{\sigma_,j}.
\end{equation}
Therefore, by Corollary \ref{coro1}, $W^0_{i,j}$ is the sum
of a number $M+1$ of random variables where $M$
is the number of skeleton points between $i$ and $j$. Since
$\SS$ has positive density $\gamma$, we have that
$M/(j-i) \to \gamma$ as $|j-i|\to \infty$.
We then obtain that $C^0(p)=\gamma \E W^0_{\sigma_1, \sigma_2}$,
where $\sigma_1, \sigma_2$ are two successive $\SS$ points to the right of $0$.
This constant is positive since $W^0_{\sigma_1,\sigma_2} \ge 1$.
\end{proof}
There is no closed form formula for $C^0(p)$. However, in \cite{FK03}, we 
obtained computable bounds for it by completely different methods.
More exact formulas have recently been obtained by Mallein and Ramassamy \cite{MR,MR2}.

To complete a revision properties of the graph $\sog(\Z,p)$, we
formulate the following result:

\begin{lemma}\label{allmoments}
Let $\sigma_1<\sigma_2< \cdots$ be the positive points of $\SS$.
Then $\sigma_{k+1}-\sigma_k$, $k=1,2,\ldots$, are i.i.d.\ and,
 for some $\theta>0$, $\E \exp \theta(\sigma_2-\sigma_1)<\infty$.
In particular, all moments of $\sigma_2-\sigma_1$ are finite.
\end{lemma}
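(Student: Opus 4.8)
The plan is to separate the two assertions. The i.i.d.\ property is essentially free: by Lemma~\ref{lemS} the set $\SS$ is a stationary renewal process, and $\sigma_1<\sigma_2<\cdots$ are simply its points lying in $(0,\infty)$; hence the increments $\sigma_{k+1}-\sigma_k$, $k\ge1$, are i.i.d.\ copies of a generic renewal interval, whose mean is $1/\gamma<\infty$ by \eqref{skelrate} and the ergodic theorem. Everything then comes down to the exponential moment, equivalently to a geometric decay bound for $\P(\SS\cap[1,n]=\varnothing)$.

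For the exponential moment I would first treat a one-sided version. Write $\SS=\SS^+\cap\SS^-$, where $\SS^+=\{i:\ i\text{ connects to every }j>i\}$ and $\SS^-=\{i:\ \text{every }k<i\text{ connects to }i\}$. By the equivalence \eqref{cruc}, $i\in\SS^+$ iff $g_{i+m}\le m$ for all $m\ge1$, where $g_j$ is the first-left connection variable (i.i.d.\ geometric$(p)$). Put $D_i:=\sup_{m\ge1}(g_{i+m}-m)$, so that $i\in\SS^+\iff D_i=0$ and $D_i=\max(g_{i+1},D_{i+1})-1\ge0$; read in the direction of decreasing $i$, $(D_i)$ is a time-homogeneous Lindley-type Markov chain with i.i.d.\ geometric driving terms. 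Since a geometric law has a finite exponential moment, a Foster--Lyapunov computation with the test function $d\mapsto e^{\vartheta d}$ (small $\vartheta>0$) gives a geometric drift towards $0$, $\E[e^{\vartheta D_i}\mid D_{i+1}=d]\le e^{-\vartheta}e^{\vartheta d}+b\,\1\{d\le d_0\}$; since, moreover, from any state $d\le d_0$ the chain reaches $0$ within $d_0$ steps with probability bounded below, this yields positive recurrence with exponentially integrable return times to $0$. In other words $\SS^+$ is a stationary renewal process whose interval has a finite exponential moment, and by reflection so does $\SS^-$.

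It then remains to pass from $\SS^+,\SS^-$ to $\SS=\SS^+\cap\SS^-$. The idea is to fix a window $[1,n]$, condition on the edges with left endpoint in the window --- which reveals $\SS^+\cap[1,n]$ --- and observe that on $\{\SS\cap[1,n]=\varnothing\}$ every $\SS^+$-point of the window must fail left-connectivity. For a given $\SS^+$-point this failure splits into a ``near'' part, measurable for the conditioning, and a ``far'' part involving only the fresh edges with left endpoint $<1$; the ``far'' events attached to successive $\SS^+$-points are \emph{nested} (a larger index imposes a weaker constraint), so on that event they collapse to a single fresh event of probability $\ge\prod_{m\ge1}(1-q^m)=\sqrt\gamma$, which must fail. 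This produces a factor $1-\sqrt\gamma$ and leaves a residual event --- ``every $\SS^+$-point of the window also fails its \emph{near} left-connectivity'' --- of the same shape but one level down, on which one recurses after splitting the window; using that $\SS^+$ already has exponentially bounded gaps (so the window contains at least $\delta n$ of its points off an event of exponentially small probability), the recursion runs $\Theta(n)$ times and produces $\P(\SS\cap[1,n]=\varnothing)\le C\rho^n$, hence the claimed exponential moment and, a fortiori, finiteness of all moments. I expect this last step to be the main obstacle: the first-left and first-right connection variables defining $\SS^+$ and $\SS^-$ are built from the same Bernoulli array and are \emph{not} independent, so $\SS^+$ cannot simply be thinned by an independent $\SS^-$, and it is this nesting/collapsing device --- or, alternatively, an identification of the skeleton points with the visits to a fixed state of a single two-sided Markov chain, or with the renovation epochs of the infinite bin model of \cite{FK03}, where exponential moments are already available --- that makes the dependence tractable.
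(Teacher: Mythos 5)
The paper does not actually prove Lemma~\ref{allmoments}: right after its statement it says \emph{``We leave the proof for the reader''} and notes that only the second moment is needed later (and is supplied by \cite{DFK12}). So there is no paper proof to compare against, and I can only assess your argument on its own terms.

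The first two-thirds of your proposal are sound. The i.i.d.\ property of the gaps is indeed immediate from Lemma~\ref{lemS}, and the one-sided analysis is correct and clean: with $g_j$ the first-left connection variable, $D_i:=\sup_{m\ge1}(g_{i+m}-m)$ satisfies $D_i=\max(g_{i+1},D_{i+1})-1$, the driving $g_{i+1}$ is independent of $D_{i+1}$, so $(D_i)$ read in decreasing $i$ is a Lindley-type chain, $\{i\in\SS^+\}=\{D_i=0\}$, and the Foster--Lyapunov test function $e^{\vartheta d}$ (with the geometric tail of $g$) gives geometric ergodicity and an exponential moment for the return time to $0$; by reflection the same holds for $\SS^-$. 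You are also right that the real obstacle is $\SS=\SS^+\cap\SS^-$, because $\SS^+$ and $\SS^-$ are built from the same Bernoulli array and are genuinely dependent.

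Where the proposal has a gap is in the ``nesting/recursion'' step. First, a factual slip: conditioning on the edges with left endpoint \emph{in} $[1,n]$ does \emph{not} reveal $\SS^+\cap[1,n]$; the event $\{i\in\SS^+\}$ for $i\in[1,n]$ also involves edges $(a,b)$ with $a\in[1,n]$ but $b>n$ and even edges with $a>n$, so one must condition on all edges with left endpoint $\ge 1$. More seriously, once you do condition on all edges with left endpoint $\ge1$, the ``far'' events of the first level exhaust the fresh randomness: at the second level of the recursion (window $[\lceil n/2\rceil,n]$, say) the ``far'' events for the sub-window now involve edges with left endpoint in $[1,\lceil n/2\rceil)$, which have already been exposed by the first-level conditioning, so they are no longer independent of the residual event and you cannot peel off another factor $\sqrt\gamma$. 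As written, the recursion does not close, and the residual event is not ``of the same shape one level down'' in a way that would yield the geometric decay $C\rho^n$ for $\P(\SS\cap[1,n]=\varnothing)$.

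You yourself hint at the right fix in your last sentence, and I would develop it. Encode the left-connectivity failure directly as a set-valued Markov chain: let $Q_i:=\{k<i:\ h_k>i-k\}$ (vertices $k$ with no outgoing edge landing in $(k,i]$), an a.s.\ finite set, and record it as distances $\tilde Q_i:=\{i-k:\ k\in Q_i\}\subset\N$. Then $\{i\in\SS^-\}=\{\tilde Q_i=\varnothing\}$ and, in increasing $i$, $\tilde Q_{i+1}$ is obtained from $\{1\}\cup(\tilde Q_i+1)$ by deleting each element independently with probability $p$; the driving noise at step $i\to i+1$ consists of edges with right endpoint $i+1$, so $(\tilde Q_i)$ is a genuine Markov chain in the filtration $\GG_i=\sigma(\alpha_{a,b}:b\le i)$. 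The Lyapunov function $V(S)=z^{|S|}$ with $z>1$ close to $1$ gives the drift $\E[V(\tilde Q_{i+1})\mid\tilde Q_i]=((1-p)z+p)^{|\tilde Q_i|+1}$, geometric outside a finite set, and $\P(\tilde Q_{i+1}=\varnothing\mid|\tilde Q_i|=m)=p^{m+1}>0$, so $\varnothing$ is a geometrically recurrent atom. The mirror chain $P_i$ (in decreasing $i$, driven by edges with left endpoint $i$) does the same for $\SS^+$. Crucially, $\{P_i=\varnothing\}$ is measurable with respect to edges with left endpoint $\ge i$ and hence independent of the entire $Q$-past $\GG_i$, which is what makes the dependence between the two one-sided renewal processes tractable; your ``nesting'' device was an attempt to exploit precisely this disjointness, but the set-valued-chain formulation makes the bookkeeping of which edges have been exposed explicit and avoids the non-closing recursion. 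You would still need a careful inspection/block argument to combine the two chains into an exponential bound for $\P(\SS\cap[1,n]=\varnothing)$, but it can be carried out at the level of $\GG$-stopping times, which is cleaner than the window-splitting recursion. Alternatively, as you say, one can transfer the problem to the infinite bin model of \cite{FK03}, where exponential moments for renovation epochs are already established.
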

We leave the proof for the reader. In what follows, we need only finiteness 
of the second moment, and this was proved in \cite{DFK12} in a more general setting
(the connectivity probability $p$ was allowed to depend on the distance between
the endpoints of an edge). 

In order to analyze the case of interest in this paper,
namely, the graph $\sog(\Z,p,u,v)$, we define a new set of points,
the {\bf $c$-renewal points}\footnote{The term ``renewal points'' was introduced in \cite{FMS}.}, where $c$ is a positive
constant. For that, we consider the  auxiliary directed graph
$\sog(\Z,p,u,0)$ and let $\widehat w_{i,j}$, $\widehat W_{i,j}$ denote the quantities 
in \eqref{max1}, \eqref{max1capital}, respectively, 
when all the $v_{i}$ are set equal to zero in \eqref{pathweight}.
Then the $c$-renewal points are defined as the points $i\in \Z$ at which that the events
\begin{align*}
&A_i^+:= \{\widehat{W}_{i,i+n} > cn \text{ for  all } n \ge 1\}
\\
&A_i^- := \{\widehat{W}_{i-n,i} > cn \text{ for all } n \ge 1\}
\\
&A_i^{-+} := \{ \alpha_{i-m,i+n}u_{i-m,i+n} < c (m+n)
 \text{ for all } m, n \ge 1\}
\end{align*}
occur simultaneously:
\[
\RR_c:=  \{i\in \Z:\, \1_{A_i^+} \1_{A_i^-} \1_{A_i^{-+}}=1\}.
\]
The three events
$A_i^+, A_i^-, A_i^{-+}$ are independent.
Indeed, they are functions of independent sets of random variables.

Points in $\RR_c$ achieve several things at the same time:
\\
\underline{First}, any point $i$ for which $A_i^+ \cap A_i^-$ holds is also
an $\SS$-point. So 
\[
\RR_c \subset \SS \text{ a.s.}
\]
\underline{Second}, any point $i$ for which $A_i^+ \cap A_i^- \cap A_i^{+-}$
holds ``splits'' the weighted graph $\sog(\Z,p,u,0)$ in the same way
that the $\SS$ points split the graph $\sog(\Z,p)$ [see \eqref{Ssplit}], that is, 
\begin{equation*}
\label{split-tilde}
i \in \RR_c,\, x<i<y \Rightarrow \widehat{w}_{x,y} = 
\widehat{w}_{x,i}+\widehat{w}_{i,y}.
\end{equation*}
Indeed, if $\pi$ is a path from $x$ to $y$ with weight $\widehat{w}_{x,y}$ such that
$i \not \in \pi$ then $\pi$ contains an edge $(a,b)$ with 
$x \le a < i < b \le y$. Since $A_i^{-+}$ holds, we have
$\alpha_{a,b}u_{a,b} < c(b-a) = c(i-a) + c(b-i)$ and since $A_i^-$ and $A_i^+$ 
hold we have $c(i-a) \le \widehat{w}_{a,i}$ and $c(b-i) \le \widehat{w}_{i,b}$.
Hence the weight of the edge $(a,b)$ can be strictly increased and this means
that $\widehat{w}_{x,y}$ can be strictly increased, contradicting
its optimality.\\
Since the $v$'s are non-negative, similar arguments work for the $w_{i,j}$,
and we have
\begin{equation}
\label{split}
i \in \RR_c,\, x<i<y \Rightarrow {w}_{x,y} = 
{w}_{x,i}+{w}_{i,y}.
\end{equation}
\\
\underline{Third}, for $c$ small enough, $\RR_c$ has positive density.
The reason for this is the law of large numbers related to the regenerative
structure over $\SS$ (that already has positive density). 
This is proved in Lemma \ref{lem3} below.
\\
\underline{Fourth}, the graph $\sog(\Z,p,u,v)$ regenerates over $\RR_c$
in the sense of Definition \ref{defreg}.
See  Lemma
\ref{lem4} below.

We let
\[
\lambda = \lambda(c) := \P(A_0^+ \cap A_0^- \cap A_0^{-+}).
\]
This quantity is the density of $\RR_c$. Our goal is to show that it is positive
for all small positive $c$.

\begin{lemma}
\label{lem3}
Assume that conditions \eqref{A} 
hold.
For all sufficiently small positive constants $c$ the random set 
$\RR_c$ has positive density.
\end{lemma}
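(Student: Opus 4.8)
## Proof Proposal

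The plan is to establish positivity of the density $\lambda(c) = \P(A_0^+ \cap A_0^- \cap A_0^{-+})$ for small $c > 0$ by exploiting the regenerative structure over the skeleton points $\SS$, which already has positive density $\gamma > 0$. The key point is that the events $A_0^+$ and $A_0^-$ concern the growth of $\widehat W_{0,n}$ and $\widehat W_{-n,0}$ \emph{along all scales simultaneously}, and the natural tool for controlling such events is a law of large numbers plus a one-sided large-deviation / Borel--Cantelli argument. First I would observe, using Corollary \ref{coro1} and the argument of Proposition \ref{speLLN} applied to $\sog(\Z,p,u,0)$, that $\widehat W_{0,n}/n \to \widehat C$ a.s.\ for some constant $\widehat C > 0$ (the regeneration over $\SS$ together with \eqref{split-tilde}-type splitting at skeleton points gives $\widehat W$ as a sum over skeleton blocks of i.i.d.\ terms whose common mean is positive because a single edge present with weight having $\E u > 0$ already forces a positive contribution; one uses $\E \max(0,u)^2 < \infty$ to get finiteness of the block weights and the SLLN). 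Fix any $c$ with $0 < c < \widehat C$.

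Next I would show that, with this choice of $c$, each of $\P(A_0^+)$, $\P(A_0^-)$, $\P(A_0^{-+})$ is strictly positive. For $A_0^{-+} = \{\alpha_{-m,n}u_{-m,n} < c(m+n) \text{ for all } m,n\ge 1\}$, this is a tail-type event in an independent array; since $\E\max(0,u)^2 < \infty$, we have $\sum_{m,n\ge 1}\P(\alpha_{-m,n}u_{-m,n} \ge c(m+n)) \le p\sum_{k\ge 1} k\,\P(u \ge ck) < \infty$, so by Borel--Cantelli only finitely many of these events occur a.s., and hence $\P(A_0^{-+}) > 0$ (it is the complement of a positive-probability-of-failure-only-finitely-often event, so in fact has probability one after shifting the threshold — but positivity is all we need; more carefully, $\P(A_0^{-+})$ can be made arbitrarily close to $1$ by taking $c$ larger, and for fixed small $c$ it is still positive because the array has a nonzero chance of all weights being small). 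For $A_0^+$: since $\widehat W_{0,n}/n \to \widehat C > c$ a.s., on the a.s.\ event where this convergence holds we have $\widehat W_{0,n} > cn$ for all large $n$; since there are only finitely many "bad" small-$n$ constraints and each has positive probability of being satisfied, $\P(A_0^+) > 0$. The same for $A_0^-$ by symmetry.

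The final step combines these: since the three events $A_0^+, A_0^-, A_0^{-+}$ are independent (they are functions of disjoint sets of the underlying independent random variables, as noted in the text), we get
\[
\lambda(c) = \P(A_0^+)\,\P(A_0^-)\,\P(A_0^{-+}) > 0 .
\]
Then $\RR_c = \{i : \1_{A_i^+}\1_{A_i^-}\1_{A_i^{-+}} = 1\}$ is a stationary point process (the defining events are shift-covariant), so by the ergodic theorem its density equals $\P(A_0^+ \cap A_0^- \cap A_0^{-+}) = \lambda(c) > 0$, and since $\RR_c \subset \SS$ it is a well-defined subset.

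The main obstacle I anticipate is making the heuristic "$\widehat W_{0,n}/n \to \widehat C$ and $c < \widehat C$ implies $\widehat W_{0,n} > cn$ for all $n$ with positive probability" fully rigorous. The subtlety is that the event $A_0^+$ requires the strict inequality $\widehat W_{0,n} > cn$ for \emph{every} $n \ge 1$, including small $n$ where $\widehat W_{0,n}$ could plausibly be zero (no path, or all paths have nonpositive weight — recall $\widehat W = \widehat w^+$). One must argue that the joint event "the finitely many initial constraints hold" and "the tail constraints hold" has positive probability; this does not follow from independence of the two parts (they are not independent — the same weights appear). The clean way is: let $F_n = \{\widehat W_{0,n} > cn\}$; we want $\P(\bigcap_{n\ge 1} F_n) > 0$. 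Write this as $\P(\bigcap_{n \le N} F_n) - \P(\bigcap_{n\le N} F_n \cap \bigcup_{n > N} F_n^c)$. The first term is positive for suitable $N$ and $c$ small (a single good path of length $\asymp n$ with all edge weights near $\E u$ suffices, and for $c$ small enough the constraint $\widehat W_{0,n} > cn$ is implied by $\widehat W_{0,n} \ge 1$, say, for $n \le N$). The second term is bounded by $\sum_{n > N}\P(F_n^c) = \sum_{n > N}\P(\widehat W_{0,n} \le cn)$, which tends to $0$ as $N \to \infty$ by the SLLN (it is even summable if one has a large-deviation bound, which the exponential moment of skeleton gaps from Lemma \ref{allmoments} plus a moment bound on block weights would give). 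Choosing $N$ large and $c$ correspondingly small makes the difference positive. I would present this argument carefully as the heart of the lemma, and relegate the $A_0^{-+}$ and $A_0^-$ pieces to brief remarks by symmetry and Borel--Cantelli.
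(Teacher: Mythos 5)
Your overall architecture matches the paper's: regeneration over $\SS$ gives a SLLN $\widehat W_{0,n}/n\to\widehat C>0$, the three events $A_0^+,A_0^-,A_0^{-+}$ are independent so the density factorizes, and the $A_0^{-+}$ estimate reduces to the summability $\sum_k k\,\P(u\ge ck)<\infty$, which is exactly what $\E\max(0,u)^2<\infty$ delivers. Those parts are essentially the paper's argument (although, as you yourself half-notice, the cleaner way to phrase the $A_0^{-+}$ step is via the product formula for independent events rather than Borel--Cantelli, since $A_0^{-+}$ demands \emph{no} failure, not finitely many).

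The gap is in the $A_0^+$ step, precisely at the point you flag as the ``main obstacle.'' Your decomposition
\[
\P\Big(\bigcap_{n\ge1}F_n\Big)\;\ge\;\P\Big(\bigcap_{n\le N}F_n\Big)\;-\;\sum_{n>N}\P(F_n^c)
\]
is correct as an inequality, but it does not by itself produce a positive lower bound: as $N\to\infty$ the first term decreases to $\P(\bigcap_n F_n)$ (the very quantity you want to bound below) while the second decreases to $0$, so arguing ``pick $N$ large and $c$ small'' is circular unless you supply a quantitative rate comparison. Moreover, the summability $\sum_n\P(\widehat W_{0,n}\le cn)<\infty$ that you invoke is not available under the paper's hypotheses \eqref{A}: the block weights $\widehat w_{\sigma_k,\sigma_{k+1}}$ only inherit a finite first moment (the negative part of $u$ is assumed to have just a first moment), so there is no exponential or even polynomial large-deviation bound to appeal to. The paper sidesteps both problems: from the a.s.\ convergence one gets that the events $E_{n_0}:=\{\widehat W_{0,n}>cn\ \forall n\ge n_0\}$ increase to a set of full probability, hence $\P(E_{n_0})>0$ for some fixed $n_0$ (no rate needed); then one intersects with the increasing event ``all edges $(j_1,j_2)$, $0\le j_1<j_2\le n_0$, are present with weight $>c$,'' which forces the initial constraints $\widehat W_{0,n}>cn$, $n\le n_0$, and uses \emph{positive correlation of increasing events} (FKG on the product space of $\alpha$'s and $u$'s) to conclude $\P(A_0^+)\ge\P(\text{initial event})\,\P(E_{n_0})>0$. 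That monotonicity/association observation is the missing ingredient; without it, your union-bound route does not close.
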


\begin{proof}
Since the three events in the definition of $\RR_c$ are independent and since their intersections form a stationary ergodic sequence, the
 density of $\RR_c$ is the product of probabilities of these events.
So it is enough to show that each of these probabilities is strictly positive. To show that $\P(A_0^+)>0$ we use Corollary \ref{coro1}. We have
\[
\widehat{w}_{0,n} =  \widehat{w}_{0,\sigma_1}+ \widehat{w}_{\sigma_1,\sigma_2}+
\cdots + \widehat{w}_{\sigma_{M_n-1}, \sigma_{M_n}}+\widehat{w}_{\sigma_{M_n},n}, 
\]
where $M_n$ is the cardinality of $\SS \cap [1,n]$ and thus
$\sigma_1 < \cdots < \sigma_{M_n} \le n$.
Note that $M_n\to\infty$ a.s. Further,  due to
the regenerative structure, the
random pairs  $(\sigma_{k+1}-\sigma_k,
\widehat{w}_{\sigma_k, \sigma_{k+1}})$, $k=1,2,\ldots$ are  i.i.d.\ with
\[
\E(\sigma_{k+1}-\sigma_k,\,
\widehat{w}_{\sigma_k, \sigma_{k+1}}) = \left({\gamma}^{-1},\, \E\widehat{w}_{\sigma_1,\sigma_2}\right).
\]
By the SLLN and the integrated renewal theorem,
\[
\lim_{n \to \infty} \frac1n \sum_{k=1}^{M_n} 
\widehat{w}_{\sigma_{k-1}, \sigma_{k} }
= \lim_{n \to \infty} \frac{M_n}{n}\,
\frac{1}{M_n} \sum_{k=2}^{M_n} \widehat{w}_{\sigma_{k-1},\sigma_k}
=
\gamma \E \widehat{w}_{\sigma_1,\sigma_2} \text{ a.s.}
\]
Clearly, since $\widehat{w}_{0,\sigma_1}$ is a proper random variable, 
\[
\lim \widehat{w}_{0,\sigma_1}/n \to 0 \quad \mbox{a.s.}
\]
We next observe that
\[
\E \widehat w_{\sigma_i, \sigma_{i+1}}
\ge (\E W^0_{\sigma_i, \sigma_{i+1}}) \, \E u > 0.
\]
The reason for the first inequality is that 
$W^0_{\sigma_i, \sigma_{i+1}}$ is the length of the longest path from
$\sigma_i$ to $\sigma_{i+1}$ and the edge weights $(u_{i,j})_{i<j}$
are independent of $(\alpha_{i,j})_{i<j}$.
Consider the nonnegative random variables
\[
Y_i := \sum_{\sigma_i \le j_1 < j_2 \le \sigma_{i+1}} u_{j_1,j_2}^-,
\]
where $x^- = (-x)^+ = -\min(0,x)$.
Note that
\[
\E Y_i \le  \tfrac{1}{2} \, \E (\sigma_{2}-\sigma_i)^2\, \E u^- < \infty,
\]
where the finiteness comes from Lemma \ref{allmoments} and assumption \eqref{A},
and that
\[
\widehat w_{\sigma_i,\sigma_{i+1}} \ge -Y_i.
\]
We thus have
\[
\liminf_{n \to \infty} \frac{\widehat w_{\sigma_{M_n},n}}{n}
\ge \liminf_{n \to \infty} \frac{M_n}{n} \, \frac{-Y_{M_n}}{n} = 0 \cdot \gamma = 0,
\quad \text{a.s.}
\]
Let now $c$ be such that 
\[
0 < c < (\E W^0_{\sigma_1, \sigma_{2+1}}) \, \E u.
\]
It is then a simple consequence of the ergodic theorem that
there exists $n_0$ such that
\[ 
\P(\widehat{W}_{0,n} > n c \text{ for all } n\ge n_0) > 0.
\]
We conclude that
\begin{multline*}
\P(A_0^+) \ge \P\big(
\big\{\text{$(j_1,j_2)$ is an edge}, u_{j_1,j_2}>c, \ \text{for all} \ 0\le j_1<j_2\le n_0\big\} 
\\
\cap 
\big\{\widehat{W}_{0,n} > n c \text{ for all } n\ge n_0\big\}\big) >0
\end{multline*}
because the two events in the intersection are positively correlated.

By symmetry, $\P(A_0^-)>0$ as well. 

It remains to show that
$\P(A_0^{-+})>0$.
To this end, we let
\[
U_k \eqdist \alpha_ku_k
\]
where the $\alpha_k$ are i.i.d.\ Bernoulli($p$) random variables, the $u_k$ are
i.i.d.\ copies of $u$
and $(\alpha_k)$ and $(u_k)$ 
are independent.
Taking into account the independence between the variables involved
in the definition of $A_0^{-+}$, we have
\[
\P(A_0^{-+}) = \prod_{m=1}^{\infty} \prod_{n=1}^{\infty}
\P(U_{m+n}<c(m+n)) =
\prod_{r=2}^{\infty} \P(U_1<ck)^{k-1}.
\]
Note that, for any $x>0$, we have 
\[
\P(U_1<x) = 1-p+p\P(u_1<x) = 1-p\P(u_1\ge x).
\]
Therefore,
\[ 
\P(A_0^{-+}) >0
\]
since $E \max (0,u)^2 <\infty$ and, then,  
\[ 
\prod_{k=2}^{\infty} \left( 1-p\P(u_1\ge kc)\right)^{k-1} \ge
K \exp (-\sum_k k\P(u_1\ge kc)) >0,
\]
for a certain constant $K>0$. 

Therefore the density $\lambda$ of $\RR_c$, for all sufficiently small positive $c$,
satisfies
\begin{equation}
\label{lala}
\lambda = \P(A_0^+ \cap A_0^- \cap A_0^{-+}) 
= \P(A_0^+) \P(A_0^-) \P(A_0^{-+}) > 0.
\end{equation}
\end{proof}

\begin{lemma}
\label{lem4}
$\chi=(\alpha_{i,j},\, u_{i,j},\, v_i)_{i<j}$ regenerates over $\RR_c$.
\end{lemma}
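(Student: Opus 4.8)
The plan is to mimic the proof of Lemma \ref{lemS} (regeneration over $\SS$), using the fact that $\RR_c$ is a.s.\ infinite (Lemma \ref{lem3}) and that $\RR_c \subset \SS$. The key point is that, just as for $\SS$, membership of a point $i$ in $\RR_c$ and the location of the $\RR_c$-points to the right of $i$ are measurable with respect to the ``future'' $\sigma$-algebra generated by $\chi$ at times $\ge i$, while the location of the $\RR_c$-points to the left of $i$ is measurable with respect to the ``past.'' Once this splitting of the information is established, conditional independence of the blocks $(\chi_{k,\ell}:\iota_{r-1}\le k<\ell\le \iota_r)$ follows exactly as in Definition \ref{defreg} by the same argument as for $\SS$.

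In detail, first I would fix $i$ and show that the event $A_i^+\cap A_i^{-+}$ together with the positions of all $\RR_c$-points $>i$ is determined by $(\alpha_{k,\ell},u_{k,\ell},v_k)$ with $k\ge i$. The event $A_i^+=\{\widehat W_{i,i+n}>cn\ \forall n\ge1\}$ depends only on edges and edge-weights with both endpoints in $[i,\infty)$, and analogously $A_j^+$ for any $j>i$; the event $A_i^{-+}$ as written involves $\alpha_{i-m,i+n}u_{i-m,i+n}$ for $m,n\ge1$, i.e.\ edges straddling $i$ — but conditional on $i$ being a skeleton point (which holds on $A_i^+\cap A_i^-$), the crucial observation from \eqref{cruc} is that no path realizing $\widehat w_{x,y}$ for $x<i<y$ uses an edge straddling $i$, so for the purpose of describing the restriction of $\chi$ to $[i,\infty)$ and the $\RR_c$-points there, the straddling edges at $i$ play no role once we condition on $i\in\RR_c$. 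More carefully: I would argue that the first $\RR_c$-point strictly to the right of $i$, say located at $j$, has $A_j^+\cap A_j^-\cap A_j^{-+}$ holding, and on $\{i\in\RR_c\}$ the event $A_j^-$ and the straddling-edge condition $A_j^{-+}$ restricted to $m$ with $j-m\ge i$ are measurable with respect to $(\alpha_{k,\ell},u_{k,\ell},v_k)_{i\le k<\ell}$, because $\widehat W_{j-m,j}$ for $j-m\ge i$ only sees that strip; and for $m$ with $j-m<i$ one uses the skeleton-point property of $i$ exactly as in the proof of Lemma \ref{lemS} (the event $\{g_{k+1}\le1,\ldots\}$ reasoning) to conclude the relevant quantities are still determined by the future of $\chi$ after $i$. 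The symmetric statement for the past is identical.

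Having split the information, I would invoke the same mechanism as in Lemma \ref{lemS}/Corollary \ref{coro1}: enumerating $\RR_c=\{\iota_r\}_{r\in\Z}$ in the $\omega$-independent way of Definition \ref{defreg}, the block $(\chi_{k,\ell}:\iota_{r-1}\le k<\ell\le\iota_r)$ is a function of the portion of $\chi$ between two consecutive $\RR_c$-points, and because the $\RR_c$-points are themselves measurable with respect to the one-sided information in the way just described, conditioning on the event $\{\iota_{r-1}=a,\iota_r=b\}$ and using independence of disjoint coordinate strips of the underlying i.i.d.\ array $\chi$ yields mutual independence of the blocks. This is precisely the statement that $\chi$ regenerates over $\RR_c$.

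The main obstacle is the straddling-edge event $A_i^{-+}$: unlike the skeleton points, whose definition is purely ``one-sided'' once phrased via first-left connection variables, $A_i^{-+}$ explicitly couples the past and the future at $i$. The resolution — and the heart of the argument — is that on $\{i\in\RR_c\}\subset\{i\in\SS\}$ these straddling edges are irrelevant to the optimization (that is exactly what \eqref{split} encodes), so they can be absorbed into either side without destroying independence; I would make this precise by checking that, conditionally on $i\in\RR_c$, the array $\chi$ restricted to strictly-left-of-$i$ coordinates, the array restricted to strictly-right-of-$i$ coordinates, and the straddling coordinates are independent, and that the block decomposition only ever refers to the first two. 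One should also double-check the enumeration issue: the $\RR_c$-points, being a subset of the renewal process $\SS$ (Lemma \ref{allmoments}) with positive density, form a stationary renewal process by the remark following Definition \ref{defreg}, so the ``$\omega$-independent enumeration'' is well defined and the blocks are in fact i.i.d.
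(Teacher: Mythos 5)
Your high-level plan matches the paper's: condition on $A_i$, show that $\FF^+$ (generated by $\chi_{k,\ell}$ with $i\le k<\ell$) and $\FF^-$ (generated by $\chi_{k,\ell}$ with $k<\ell\le i$) are conditionally independent given $\sigma(A_i^+)\vee\sigma(A_i^-)\vee\sigma(A_i^{-+})$, and then argue that the $\RR_c$-points to the right of $i$ are determined by the future of $\chi$. The conditional-independence part is essentially the paper's observation that $A_i^+\in\FF^+$, $A_i^-\in\FF^-$ and $A_i^{-+}$ is generated by coordinates disjoint from both, so that piece of your sketch is fine.

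The genuine gap is in the future-determination step, and your patch for it does not work. You write that for $m$ with $j-m<i$ one handles the constraint $\widehat W_{j-m,j}\ge cm$ ``using the skeleton-point property of $i$ exactly as in the proof of Lemma~\ref{lemS} (the event $\{g_{k+1}\le 1,\ldots\}$ reasoning).'' That reasoning is about connectivity of the unweighted graph via first-left connection variables; it says nothing about the weighted optimization defining $A_j^-$ and $A_j^{-+}$, and it cannot be transplanted. What is actually needed is the identity (the paper's key step)
\[
A_0\cap A_k = A_0\cap \widetilde A_k,\qquad
\widetilde A_k := \bigcap_{1\le m\le k,\ n\ge1}
\{\widehat W_{k-m,k}\ge cm,\ \widehat W_{k,k+n}\ge cn,\ U_{k-m,k+n}<c(m+n)\}\in\FF^+,
\]
and proving the inclusion $A_0\cap\widetilde A_k\subset A_k$ requires two specific facts your sketch omits: (a) for $m>k$, additivity of path weight through $0$, namely $\widehat W_{k-m,k}\ge \widehat W_{k-m,0}+\widehat W_{0,k}\ge c(m-k)+ck=cm$ (using $A_0^-$ and the $m=k$ constraint in $\widetilde A_k$); and (b) for $m>k$, $n\ge1$, the edge $(k-m,k+n)$ straddles $0$, hence $U_{k-m,k+n}<c(m+n)$ is already enforced by $A_0^{-+}$. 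Without (a) and (b) you have not shown the right-of-$0$ $\RR_c$-points are $\FF^+$-measurable on $A_0$, which is exactly the point that makes the block decomposition independent.

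There is also a smaller but real slip: you assert that ``conditional on $i$ being a skeleton point (which holds on $A_i^+\cap A_i^-$)... no path realizing $\widehat w_{x,y}$ for $x<i<y$ uses an edge straddling $i$.'' That is false in general: being a skeleton point guarantees existence of a path through $i$, not optimality of such a path. The splitting property \eqref{split} requires all three events, $A_i^+\cap A_i^-\cap A_i^{-+}$, precisely because a heavy straddling edge could otherwise beat the route through $i$. Likewise, citing Lemma~\ref{allmoments} to conclude that $\RR_c$ is a renewal process is misdirected: a subset of a renewal process is not automatically a renewal process; the renewal structure of $\RR_c$ follows from the regeneration property you are in the middle of proving together with stationarity, not from $\RR_c\subset\SS$.
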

\begin{proof}
It suffices to show that, conditional on $A_i$, the future of $\chi$ after $i$ is
independent of the past, and that, on $A_i$, the future (respectively, past) 
$\RR_c$-points depend only on the future  (respectively, past)  of $\chi$.
Without loss of generality, let $i=0$. 
Let $\FF^+$ be the $\sigma$-algebra generated by $\chi_{i,j}$, $0 \le i < j$.
Similarly, we define $\FF^-$ as 
the $\sigma$-algebra generated by $\chi_{i,j}$, $i<j \le 0$. 
The two $\sigma$-algebras are independent. Now let $\GG^+ \subset
\FF^+$, $\GG^- \subset\FF^-$ be two sub-$\sigma$-algebras
and let $\GG$ be a $\sigma$-algebra independent of $\FF^+$
and $\FF^-$. It is easy to see that $\FF^+$ and $\FF^-$
are independent conditionally on $\GG^- \vee \GG^+ \vee \GG$.
Apply this observation to $\GG^+ =\sigma(A_0^+)$,
$\GG^- =\sigma(A_0^-)$ and $\GG=\sigma(A_0^{-+})$.
We next establish that, on $A_0$, the $\RR_c$ points to the right of $0$
depend only on variables $\chi_{i,j}$, $0 \le i < j$ (and, similarly, for the past).
This is equivalent to showing that, on $A_0$, for each $k>0$, the event $A_k$
depends only on variables $\chi_{i,j}$, $0 \le i < j$.
Recall that
\[
A_0 = \bigcap_{m\ge 1, n \ge 1}
\{\widehat{W}_{-m,0} \ge cm,\, 
\widehat{W}_{0,n} \ge cn,\, U_{-m,n} < c(m+n)\},
\]
where 
\begin{equation}
\label{UUU}
U_{i,j} := \alpha_{i,j}u_{i,j},
\end{equation}
and that
\[
A_k = \bigcap_{m\ge 1, n \ge 1}
\{\widehat{W}_{k-m,k} \ge cm,\, 
\widehat{W}_{k,k+n} \ge cn,\, U_{k-m,k+n} < c(m+n)\}.
\]
Consider the truncated event
\[
\widetilde A_k = \bigcap_{1 \le m\le k, n \ge 1}
\{\widehat W_{k-m,k} \ge cm,\, \widehat W_{k,k+n} \ge cn,\, U_{k-m,k+n} < c(m+n)\}
\]
for which we have $\widetilde A_k \in \FF_+$.
Our claim will follow from the identity
\[
A_0 \cap A_k = A_0 \cap \widetilde A_k.
\]
Since $A_k \subset \widetilde A_k$ we only have to show that
$A_0 \cap \widetilde A_k \subset A_0 \cap A_k$.
Suppose that $A_0$ and $ \widetilde A_k$ occur. 
We need to show that $\widehat W_{k-m,k} \ge m$ for all $m > k$
and that $U_{k-m,k+n} < c(m+n)$ for all $m>k$ and $n \ge 1$.
Let $m >k$. Then $k-m < 0 < k$.
Since $A_0$ holds, we have $\widehat W_{k-m,k} \ge \widehat W_{k-m,0} + \widehat W_{0,k}$.
But $\widehat W_{k-m,0} \ge c(m-k)$ (because $A_0$ holds)
and $\widehat W_{0,k} \ge ck$ (because $\widetilde A_k$ holds).
Hence $\widehat W_{k-m,k} \ge c(m-k)+ck=cm$, as required.
Let again $m>k$ and let $n \ge 1$. Then $k-m < 0 < k+n$
and $U_{k-m, k+n} < c(m+n)$ because $A_0$ holds.
\end{proof}

\begin{corollary}
\label{coro2}
Under the assumptions of Lemma \ref{lem3},
$\RR_c$ is a stationary renewal process with
rate $\lambda>0$ as in \eqref{lala}
and both $(\widehat{W}_{i,j})_{i<j}$ and $(W_{i,j})_{i<j}$ regenerate over $\RR_c$.
\end{corollary}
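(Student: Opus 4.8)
The plan is to obtain Corollary~\ref{coro2} by combining Lemma~\ref{lem3} and Lemma~\ref{lem4} with the general remark following Definition~\ref{defreg}, plus one elementary measurability observation. First I would record that $\RR_c$ is a \emph{stationary} point process: the events $A_i^+,A_i^-,A_i^{-+}$ are the translates by $i$ of $A_0^+,A_0^-,A_0^{-+}$, each is a measurable function of the array $\chi=(\alpha_{i,j},u_{i,j},v_i)_{i<j}$, and since $\chi$ is stationary and these functions commute with the shift, the random set $\RR_c=\{i:\1_{A_i^+}\1_{A_i^-}\1_{A_i^{-+}}=1\}$ — and in fact the pair $(\RR_c,\chi)$ — is invariant in law under translations of $\Z$. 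By Lemma~\ref{lem3}, $\RR_c$ has strictly positive density $\lambda$ for all small $c>0$, hence is a.s.\ unbounded above and below, so the $\omega$-independent enumeration $\cdots<\iota_{-1}<\iota_0\le 0<\iota_1<\cdots$ of Definition~\ref{defreg} is a.s.\ well defined.

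Next I would invoke Lemma~\ref{lem4}, which says that $\chi$ regenerates over $\RR_c$: the blocks $(\chi_{i,j}:\iota_{r-1}\le i<j\le\iota_r)$, $r\in\Z$, are independent, and — as established inside the proof of that lemma — on $\{i\in\RR_c\}$ the $\RR_c$-points to the right of $i$ (resp.\ to the left) depend only on the part of $\chi$ after $i$ (resp.\ before $i$). Consequently each spacing $\iota_r-\iota_{r-1}$ is a function of the $r$-th $\chi$-block alone, so the spacings are independent; joint stationarity of $(\RR_c,\chi)$ makes $\iota_{r+1}-\iota_r$, $r\ge 1$, identically distributed, the gap straddling the origin being size-biased as in the usual stationary-renewal (Palm) picture. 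This is precisely what ``stationary renewal process'' means, and applying the ergodic theorem to $\1_{A_0}$ (equivalently, the elementary renewal theorem) identifies its rate with the density $\lambda$ of \eqref{lala}.

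For the last assertion I would use that $(\widehat W_{i,j})_{i<j}$ and $(W_{i,j})_{i<j}$ are \emph{block-measurable}: any path from $i$ to $j$ uses only vertices in $[i,j]$, so $w_{i,j}$ and $W_{i,j}$, and likewise $\widehat w_{i,j}$, $\widehat W_{i,j}$, are measurable functions of $(\chi_{x,y}:i\le x<y\le j)$. Hence whenever $\iota_{r-1}\le i<j\le\iota_r$ the variables $W_{i,j}$ and $\widehat W_{i,j}$ are measurable with respect to the $r$-th block of $\chi$; since those blocks are independent, so are the families $(\widehat W_{i,j}:\iota_{r-1}\le i<j\le\iota_r)$ and $(W_{i,j}:\iota_{r-1}\le i<j\le\iota_r)$ over $r\in\Z$, which is exactly regeneration over $\RR_c$.

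I expect the only mildly delicate step to be the bookkeeping in the second paragraph — deducing ``stationary renewal process'' (i.i.d.\ spacings away from the origin, size-biased gap at $0$, and identification of the rate) from ``$(\RR_c,\chi)$ jointly stationary and $\chi$ regenerates over $\RR_c$''; but this is routine renewal/Palm packaging and the substantive content has already been done in Lemmas~\ref{lem3}--\ref{lem4}.
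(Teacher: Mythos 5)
Your proof is correct and reconstructs what the paper leaves implicit: Corollary~\ref{coro2} is stated without proof and is meant to follow immediately from Lemma~\ref{lem3} (positive density), Lemma~\ref{lem4} (regeneration of $\chi$ over $\RR_c$), the remark after Definition~\ref{defreg} (joint stationarity of $(\NN,\chi)$ plus regeneration $\Rightarrow$ stationary renewal process), and the observation that $W_{i,j}$ and $\widehat W_{i,j}$ are measurable functions of $(\chi_{x,y}:i\le x<y\le j)$, hence block-measurable — exactly the chain you assemble. One small phrasing point: your clause ``the $\RR_c$-points to the right of $i$ depend only on $\chi$ after $i$, consequently each spacing is a function of the $r$-th block alone'' is slightly misleading, since determining whether a candidate $k$ is in $\RR_c$ involves $\chi$ beyond $k$ and not just $\chi$ restricted to $[\iota_{r-1},\iota_r]$; the clean justification is simply that, in Definition~\ref{defreg}, the $r$-th block as a random element already encodes its own length, so ``blocks independent'' tautologically gives ``spacings independent'' — no extra argument is needed there.
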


\begin{proof}[\bf Proof of Theorem \ref{thmLLN}]
Let $c$ be chosen as in Lemma \ref{lem3}.
By Lemma \ref{lem3}, $\RR_c$ has positive density. 
By Lemma \ref{lem4}, we have regeneration over $\RR_c$. 
By \eqref{split}, the maximal path from some $i$ to some $j>i$
can be written as a sum of independent finite-mean  random variables.
Let $\tau_1 < \tau_2 < \cdots$ be the points of
$\mathcal R_c \cap (0,\infty)$. 
The LLN $\lim_{n \to \infty} W_{0,n}/n = C$ a.s., 
with $C = \lambda^{-1} \E[W_{\tau_1, \tau_2}]$,
then follows from a standard renewal theory argument.
\end{proof}

\section{Central limit theorem}
 For simplicity we now assume that both $u$ and $v$ are nonnegative;
but it is essential that they satisfy more stringent moment conditions.
Our assumptions for this section are then
\[
\P(u\ge 0, v\ge 0)=1, \, \E v ^2< \infty,\, \E u >0, \, \E u^3 < \infty.
\tag{B}\label{B}
\]

\begin{theorem}
\label{thm2}
Consider the weighted random ordered graph $\sog(\Z,p,u,v)$ with $0<p\le 1$, assume that conditions \eqref{B} hold.
Let $W_{i,j}$ be the values of optimization
problem \eqref{max1}. 
Let $C$ be the constant appearing in Theorem \ref{thmLLN}.
Then there is a constant $b>0$
such that
\[
\frac{W_{0,[nt]} - C nt}{b \sqrt{n}},\, t \ge 0,
\]
converges weakly, as $n \to \infty$, to a standard Brownian motion.
\end{theorem}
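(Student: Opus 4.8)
The plan is to lift the splitting identity \eqref{split} over $\RR_c$ to a functional statement and then invoke the classical functional central limit theorem for renewal--reward processes. Fix $c$ as in Lemma~\ref{lem3} and let $\cdots<\tau_0\le 0<\tau_1<\tau_2<\cdots$ enumerate $\RR_c$; put $X_k:=\tau_{k+1}-\tau_k$ and $Y_k:=W_{\tau_k,\tau_{k+1}}$. Under \eqref{B} all weights are nonnegative and every pair of vertices occurring below is connected (each $\tau_k$ lies in $\SS$), so $W=w$ throughout, and applying \eqref{split} successively at $\tau_1<\cdots<\tau_{\nu(n)}$, where $\nu(n):=\#\{k\ge1:\tau_k\le n\}$,
\[
W_{0,n}=W_{0,\tau_1}+\sum_{k=1}^{\nu(n)-1}Y_k+W_{\tau_{\nu(n)},n}.
\]
By Corollary~\ref{coro2} the pairs $(X_k,Y_k)_{k\ge1}$ are i.i.d., so $t\mapsto W_{0,[nt]}$ is, up to the two boundary terms, the renewal--reward process built from $(X_k,Y_k)$, whose asymptotic rate $\E Y_1/\E X_1$ is precisely the constant $C$ of Theorem~\ref{thmLLN}.

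Granting the two moment bounds $\E X_1^2<\infty$ and $\E Y_1^2<\infty$, the remainder is standard. A two-dimensional Donsker theorem gives that $n^{-1/2}\bigl(\sum_{k\le[ns]}X_k-sn\E X_1,\ \sum_{k\le[ns]}Y_k-sn\E Y_1\bigr)$ converges in $D([0,\infty),\R^2)$ to a Brownian motion with covariance $\cov(X_1,Y_1)$; the renewal--reward process $t\mapsto\sum_{k:\tau_k\le t}Y_k$ is obtained from this two-dimensional random walk by a random time change (inverting the partial sums of the $X_k$), an operation continuous, in the Skorokhod topology, at limit points with continuous paths. The continuous-mapping theorem then yields $n^{-1/2}\bigl(\sum_{k\le\nu([nt])}Y_k-Cnt\bigr)\Rightarrow bB(t)$, with $B$ a standard Brownian motion and
\[
b^2=\frac{\E\bigl[(Y_1-CX_1)^2\bigr]}{\E X_1}=\frac{\var(Y_1-CX_1)}{\E X_1}>0 ,
\]
the positivity because the cycle reward $Y_1$ is not an almost-sure constant multiple of the cycle length $X_1$. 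It then remains to check that the boundary terms are uniformly negligible on compact time sets: $W_{0,\tau_1}/\sqrt n\to0$ since $W_{0,\tau_1}$ is a fixed a.s.\ finite random variable, while the overshoot $W_{\tau_{\nu(nt)},nt}$ is dominated by the crude per-cycle upper bound $\sum_{\tau_{\nu(nt)}\le i<\tau_{\nu(nt)+1}}v_i+\sum_{\tau_{\nu(nt)}\le i<j\le\tau_{\nu(nt)+1}}u_{i,j}^+$, and these per-cycle bounds over successive cycles form an i.i.d.\ square-integrable sequence, so their running maximum over the $O(n)$ cycles up to time $nt$ is $o_\P(\sqrt n)$. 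Dividing by $b\sqrt n$ gives convergence of $(W_{0,[nt]}-Cnt)/(b\sqrt n)$ to standard Brownian motion, as claimed.

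The substance of the proof, and the reason the stronger conditions \eqref{B} are needed in place of \eqref{A}, is the two moment bounds. For $\E X_1^2<\infty$ I would argue in the spirit of Lemma~\ref{allmoments}: the number of $\SS$-points lying between two consecutive $\RR_c$-points has a light (geometric-type) tail, because along the regenerative structure over $\SS$ each of the defining events $A_i^+,A_i^-,A_i^{-+}$ has probability bounded away from $0$; this, together with the exponential moments of the $\SS$-gaps, would give $\E X_1^2<\infty$ provided one controls the effect of atypically large crossing edge weights (a single edge $(a,b)$ with $u_{a,b}\ge c(b-a)$ forbids $\RR_c$-points throughout $(a,b)$, so the tail of $X_1$ is sensitive to the upper tail of $u$), which is exactly where the moment assumption on $u$ in \eqref{B} enters. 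For $\E Y_1^2<\infty$ I would use the crude bound: the weight of any path from $\tau_1$ to $\tau_2$ is at most $\sum_{\tau_1\le i<\tau_2}v_i$ (each vertex used at most once) plus $\sum_{\tau_1\le i<j\le\tau_2}u_{i,j}^+$. The vertex part is square-integrable because $\RR_c$ is a function of the $\alpha$'s and $u$'s only and hence independent of the $v$'s, so conditioning on $X_1$ reduces it to $\E X_1\,\var v+\E X_1^2(\E v)^2<\infty$ under \eqref{B}; the edge part is the delicate piece, since within a cycle the $u$'s are conditioned by the renewal events and are not independent of $X_1$, and handling it is the one-dimensional analogue of the computation carried out, at some length, in \cite{FMS}. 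I expect these moment estimates — and in particular the interplay between the renewal-gap tail, the cycle-reward tail, and the moments of $u$ — to be the main technical obstacle.
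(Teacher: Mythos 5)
Your high-level route — regenerative decomposition over $\RR_c$, reduction to a renewal--reward FCLT, and identification of the two key moment bounds $\E X_1^2<\infty$ and $\E Y_1^2<\infty$ — matches the paper's strategy, and the Donsker-plus-random-time-change mechanics, the negligibility of the boundary terms, and the standard variance formula $b^2=\var(Y_1-CX_1)/\E X_1$ are all fine. The genuine gap is in the proof of $\E X_1^2<\infty$. Your heuristic --- ``the number of $\SS$-points between consecutive $\RR_c$-points has a geometric-type tail because each of $A_i^+,A_i^-,A_i^{-+}$ has probability bounded away from zero along $\SS$'' --- does not work as stated: $A_i^+$ and $A_i^{-+}$ are events about the \emph{entire} semi-infinite future from $i$, so the indicators $\1_{\sigma_k\in\RR_c}$ along the $\SS$-skeleton are highly correlated, and one cannot argue a geometric tail by ``independent trials'' over $\SS$-cycles. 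The sensitivity to the upper tail of $u$ that you flag is indeed the heart of the matter, but you present it as a caveat rather than address it.

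The paper's mechanism for closing this gap is worth noting because it is where the real work happens. One first introduces the larger scaffolding set $\UU_c:=\{i:\1_{A_i^-}=1\}$ — only the \emph{backward}-looking event is required — and shows, via the $\SS$-regeneration and the LLN, that the first positive $\UU_c$-point has an exponential moment for small $c$. One then truncates the forward-looking events, defining $\mu:=\inf\{d>0:\1_{A_{0,d}^+\cap A_{0,d}^{-+}}=0\}$, which has $\P(\mu=\infty)>0$ and, under \eqref{B}, $\E(\mu\mid\mu<\infty)<\infty$. The algorithm that tests the forward events at a $\UU_c$-point, and on failure skips forward by $\mu$ plus a fresh $\UU_c$-gap, produces a dominating random variable $\Psi\ge\tau_1$ consisting of a geometric number of i.i.d.\ finite-mean increments, whence $\E\tau_1<\infty$ and then, by stationary renewal theory, finite variance for the gap. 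This truncation-and-restart step is precisely what your argument is missing; without it, or something equivalent, the claim $\E X_1^2<\infty$ is unproved. A second, smaller, point: positivity of $b^2$ is asserted by you (and by the theorem) but not argued in either proof; it requires checking that $W_{\tau_1,\tau_2}$ is not a.s.\ a linear function of $\tau_2-\tau_1$.
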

Note that by weak convergence we mean convergence of the
law of the process above, considered as a random element of the space
$D[0,\infty)$ equipped with the topology of uniform convergence
on compact sets.

%
%
%
%

\begin{proof}[Sketch of proof of Theorem \ref{thm2}]
The essential part is in proving the ordinary CLT, that is,
$(W_{0,n}-Cnt)/ \sqrt{n}$ converges in distribution to
a  normal random variable with positive variance $b^2$.
The passage from the CLT to the  functional version stated in the theorem 
is standard.
The difficulty in proving the ordinary CLT is in establishing that the 
variance between successive positive $\RR_c$--points is finite.
Once this is established, standard renewal theory shoes that
finiteness of variance between successive positive $\RR_c$--points
is equivalent to finiteness of  expectation of the first positive $\RR_c$--point.
We shall show this by constructing an upper bound.
We follow ideas in \cite{FMS}.

Let $\UU_c$ be the random set containing $i$ such that
$A_i^-$ occurs. Just as is Lemma \ref{lem4}, we have that our random
structure $\chi$ regenerates over $\UU_c$. We have $\RR_c \subset
\UU_c$ and, for $c$ small enough (Lemma \ref{lem3})
$\UU_c$ has positive density.
In particular, $\UU_c$ is a stationary renewal process.

Moreover, the variance between two successive points points of $\UU_c$
is finite.
To show this, it suffices to show (since $\UU_c$ is a stationary renewal process)
that the first positive point of $\UU_c$ has finite expectation provided
that 
\[
c < \gamma\, \E \min_{\sigma_1 \le i<j \le \sigma_2} (u_{i,j}).
\]
In fact, more is true: under this assumption, 
the first positive point of $\UU_c$ has an exponential moment.
We skip the proof as it is analogous to the proof of
Proposition 3.12 in \cite{FMS}.

Define events analogous to $A_i^+$, etc.
\[
A_{i,d}^+ := \{\widehat{W}_{i,i+n} > cn,\, 1 \le n \le d\}
\]
\[
A_{i,d}^- := \{\widehat{W}_{i-n,i} > cn,\, 1 \le n \le d\}
\]
\[
A_{i,d}^{-+} := \{U_{i-m,i+n} < c(m+n),\, 1 \le m \le d,\, n \ge 1\},
\]
where the $U_{i,j}$ are as in \eqref{UUU}.
Notice that, as $d \to \infty$,  the event $A_{i,d}^+$ decreases to $A_i^+$,
and similarly for the other two events.

Consider now the random variable
\[
\mu:=\inf\{d>0:\, \1_{A_{0,d}^+ \cap A_{0,d}^{-+}}=0\}.
\]
Note that
\[
\P(\mu=\infty) = \P(A_{0}^+ \cap A_{0}^{-+}) >0,
\]
for $c$ sufficiently small.
By an estimate similar to the one performed in the proof
of Lemma \ref{lem3}, we see that if conditions \eqref{B} are satisfied
then
\[
\E(\mu| \mu < \infty) < \infty.
\]
For $n>0$ (perhaps random), let $\theta^n \mu$ be obtained in the same
manner as $\mu$ after shifting the origin at $n$, namely, let
\[
\theta^n \mu:= \inf\{d>0:\, \1_{A_{n,d}^+ \cap A_{n,d}^{-+}}=0\}.
\]
We now consider the following algorithm:
\begin{itemize}
\item[1)] Initialize by letting $\psi_0$ be the first positive point of $\UU_c$.
\item[2)] Suppose that $\psi_0,\ldots, \psi_k$ have been defined.
If $\theta^{\psi_k}\mu < \infty$ let $\psi_{k+1}$ be the smallest
point of $\UU_c$ to the right of $\psi_k+\theta^{\psi_k}\mu$.
Otherwise, if $\theta^{\psi_k}\mu=\infty$, let $\psi_{k+1}=+\infty$,
set $\Psi:= \psi_k$ and stop.
\end{itemize}

Clearly, $\Psi$ is an upper bound to the first positive $\RR_c$ point. 
Taking into account the regenerative structure, we easily see that $\E\Psi<\infty$
if the first positive point of $\RR_c$ has finite expectation and if 
$\E(\mu|\mu<\infty)<\infty$.
Since the first holds for sufficiently small $c$ while the second holds because
$\E u^3 < \infty$, $\E v^2$, by assumption,
we have established that the first positive point of $\RR_c$ has finite expectation
and thus, by renewal theory, that the distance between successive
positive points of $\RR_c$ has finite variance.

Using the established fact that $\RR_c$ has finite variance it can be shown,
just as in Proposition 3.14 of \cite{FMS}, that
\[
b^2 := \var(W_{\Gamma_1,\Gamma_2}-C(\Gamma_2-\Gamma_1)) < \infty,
\]
where $\Gamma_1, \Gamma_2$ are the first two positive $\RR_c$-points.
The CLT now follows, from the standard regenerative CLT.

\end{proof}


\section{Convergence to the continuum cascade model}
Consider the  random graph $\sog(\Z,p,u,v)$ when $p$ is small.
Notice that many events of interest, such as ``the longest path from $0$ to a vertex $i>0$'',
depend only on the component of the graph that contains vertex $0$ (that is,
on the subgraph consisting of all vertices reachable from $0$);
let $\sog^0(\Z,p,u,v)$ denote this component.
Computing the probability of such events is hard. 
However, a reasonable approximation, when $p \to 0$, can be obtained
by showing that the component $\sog(\Z,p,u,v)$, as a random element of a suitable
Polish space, converges weakly to a random object that is a weighted tree
and, therefore, recursive equations can be written for the probabilities/quantities
of interest. We shall denote the limiting random object by $\ccm^0(u,v)$.
In the sequel, we first give the appropriate definitions of the random objects involved,
we describe the metric space in which they take values, we point out that this space
is a complete separable metric space under suitable metrics, and we finally sketch 
the proof of the convergence in distribution.

\paratitle{Scaling and the proposed limit}
We wish to consider a certain limit of the weighted random graph 
with weights on the edges and vertices when $p \to 0$. 
Let us consider a sequence of graphs indexed by positive
integers $n$ such that $p_n \to 0$ in a way that $n p_n$ converges
to a positive constant, say, $1$. 
Take as set of vertices the set $\frac{1}{n} \Z$ and declare
$(i/n, j/n)$, $i<j$, as an edge with probability $p_n$.
Let the edge and vertex weights be equal in distribution to
$u_n$ and $v_n$ respectively. We thus consider a weighted random graph 
that, following earlier notations, we denote as
$G_n = \sog(\frac1n\Z, p_n, u_n, v_n)$.

The continuum cascade model (CCM) \cite{IK12} is defined as follows:
Let $N$ denote a stationary Poisson(1) process on the real line.
Let, for each $t \in \R$, $N_t$ be equal in distribution to $N$
and assume that the collection $(N_t, t \in \R)$ is independent.
The CCM is a random graph with vertices $\R$.
For $s, t \in \R$, $s< t$, we declare that $(s,t)$ is an edge if $t$ is 
a point of the Poisson process $N_s$. Let $u, v$ be positive
random variables. We attach i.i.d.\ weights to the edges of the CCM 
that are all equal to $u$ in distribution. Similarly, we 
attach i.i.d.\ weights to the vertices of the CCM 
that are all equal to $v$ in distribution. Denote the weighted CCM
by CCM$(u,v)$.
(We shall leave to the reader to show that
all functions of the $\ccm(u,v)$ that we use in the sequel of the paper are measurable
functions of it.)
A path in the CCM is a finite or infinite
sequence $t_0 < t_1 < \cdots$ of vertices
such that $t_k$ is a point of $N_{t_{k-1}}$, $k=1,2,\ldots$
The weight of a finite path $(t_0, \ldots, t_m)$ is
the sum of the weights of its first $m-1$ vertices and its $m-1$ edges.

Let CCM$^0(u,v)$ be the restriction of the CCM$(u,v)$ on the vertices that
are reachable from $0$ via paths. Clearly, the CCM$_0(u,v)$ 
has countably many vertices and countably many edges
(but it has finitely many vertices on every bounded interval).
The set of vertices is a countable random subset of $\R_+$.
In fact,  CCM$^0(u,v)$ is a tree and it shall be considered as rooted
at $0$.

Going back to $\sog(\frac1n\Z, p_n, u_n, v_n)$ and forgetting the directions
of the edges, we see that it is a strongly connected graph and certainly
not a tree: the very existence of skeleton points  creates
cycles. However, the rate of skeleton points, see equation \eqref{skelrate},
can be shown
to satisfy
\[
\log \gamma_n \asymp - c_1 n
\]
for some positive constant $c_1$.
The intuition gained from this is that cycles vanish in the limit
and that the graph becomes a forest (a collection of trees).

Let $G^0_n$ be the restriction of $G_n$ on those vertices to the right
of the origin that are reachable from the origin. The only discrepancy
between $G^0_n$ and $G_n$ is between $0$ and the first skeleton
point to the right of $0$. (But the first skeleton point tends to $\infty$
as $n \to \infty$.) Our goal is to show that
\[
G^0_n \convd \text{CCM}^0(u,v) \text{ as } n \to \infty,
\]
where $\convd$ denotes convergence in distribution in a certain sense defined below. Once we have this convergence rigorously defined and
proved, we also have convergence in distribution of interesting functionals
of the graph to the corresponding functional of the limiting object.

\paratitle{Metrics}
A \underline{weighted geometric graph (wgg)} $G=(V, E, \ell, u, v)$ is a graph $(V,E)$
in the ordinary sense and three weight functions:
$\ell : E \to \R_+, u : E \to \R_+$ are weights on edges and $v: V \to \R_+$
are weights on vertices. The difference between $\ell$ and $u$ is that
$\ell$ is used to define distances between vertices:
\[
\ell(x,y) = \inf_{\pi \in \Pi_{x,y}} \ell(\pi),
\]
where $\Pi_{x,y}$ is the set of paths from $x$ to $y$ and where the 
\underline{length}
$\ell(\pi)$ of a path $\pi=(x_0, x_1, \ldots,x_m)$
is taken to be $\ell(x_0,x_1)+\cdots
+\ell(x_{k-1},x_k)$. This defines an extension of the original $\ell$ to
all pairs of vertices. The extension is a metric on $V$. Denote by
\[
B_r(x) := \{y \in V:\, \ell(x,y) \le r\}
\]
the ball of radius $r$ centered at $x$.
We call $G$ {\em locally finite} if $B_r(x)$ is a finite set for all $x$ and $r$.
The \underline{weight} of a path $\pi=(x_0, x_1, \ldots,x_m)$ is defined as
$\sum_{i=1}^m [v(x_{i-1})+u(x_{i-1},x_i)]$.
If $G, G'$ are \underline{finite} wgg's we define
\[
d(G,G') = \min_\phi \max_{e\in E,x\in V}\big\{
|\ell(e)-\ell'(\phi(e))| + |u(e)-u'(\phi(e))| + |v(x)-v'(\phi(x)| \big\},
\]
where $\phi$ runs over all bijections from $V$ to $V'$ that
preserve the edge structure: $e \in E \iff \phi(e) \in E'$.
If $G, G'$ are locally finite 
infinite wgg's then we define a distance between them
from the point of view of specific vertices, $0,0'$, say, that we call roots.
Let $G^{(r)}$ be the restriction of $G$ on the set of vertices $B_r(0)$ in
the obvious sense. Defining $d$ as in the last display but also
further restricting $\phi$ to be such that $\phi(0)=0'$, we let
\[
D(G,G') := \int_0^\infty \big(1 \wedge d(G^{(r)}, G'^{(r)})\big)
\, e^{-r} dr.
\]
It is easy to see that $D$ is a metric on the set $\GG_*$ of locally
finite wgg's.\footnote{more specifically, $\GG_*$ should be taken
to be  a set of  rooted locally finite
wgg's whose sets of vertices vary on the set of subsets of a universal set}
This metric is just an extension of the one proposed by Aldous and Steele
\cite{AS}. Adopting the proof of \cite[Prop.\ 2]{GAB}, one easily has
that $\GG_*$ is a Polish (complete separable metric) space.
Intuitively, $D(G,G')$ is small if the wgg's look similar (identical, up to
isomorphism, as logical graphs and with comparable lengths and weights)
on every finite-radius ball around the root.
We therefore have the full machinery of convergence of probability measures
\cite{BILL68} on Polish spaces available. In particular,
if $G_n, G$ are random elements of $\GG_*$, the convergence
$G_n \convd G$ is equivalent to $\E f(G_n) \to \E f(G)$ for any
bounded continuous function $f: \GG_* \to \R$.

Consider now the weighted random directed 
graph $G_n^0$ as defined earlier.
Think of it as a random wgg with root $0$ and edge length equal to
their physical distance, i.e., take the length of $(i/n,j/n)$,
$i<j$, to be $(j-i)/n$.
Similarly, consider $\ccm^0(u,v)$ as a random wgg with root $0$
and, if $(s,t)$ is an edge, let its length be $t-s$.

\begin{theorem}\label{convd}
For $n \in \N$, let $u_n, v_n$ be positive random variables,
$0<p_n<1$ and let 
$G_n^0$ be the weighted random directed 
graph $\sog(\frac1n\Z, p_n, u_n, v_n)$
restricted on the set of vertices reachable from $0$.
Assume that, as $n \to \infty$, $n p_n \to 1$, $u_n \convd u$,
$v_n \convd v$.
Let $\ccm(u,v)$ be the weighted continuum cascade model
on $[0,\infty)$ with i.i.d.\ edge weights distributed according to $u$
and i.i.d.\ vertex weights distributed according to $v$,
and let $\ccm^0(u,v)$ be its restriction on the set of vertices reachable from
$0$. Then
\[
G^0_n \convd \text{CCM}^0(u,v) \text{ as } n \to \infty.
\]
\end{theorem}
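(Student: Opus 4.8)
The plan is to prove convergence in $(\GG_*,D)$ by the standard route of tightness together with identification of finite-dimensional limits, where ``finite-dimensional'' refers to the ball-restrictions $G\mapsto G^{(r)}$. Since $D$ is the average of $d(G^{(r)},G'^{(r)})$ against $e^{-r}\,dr$, these maps are Borel and generate the Borel $\sigma$-algebra of $\GG_*$, so it suffices to establish (a) that $(G_n^0)_n$ is tight in $\GG_*$, and (b) that for every $r>0$ the restriction $G_n^0\cap[0,r]$ converges in distribution, as a finite wgg, to the restriction $\ccm^0(u,v)\cap[0,r]$. A preliminary remark makes (b) the right reduction: because every edge of $\sog(\tfrac1n\Z,p_n,u_n,v_n)$ joins a smaller vertex to a larger one, any $0$-to-$y$ path has $\ell$-length at least the position of $y$; hence a vertex of $G_n^0$ lying in $[0,r]$ is already reachable from $0$ within $[0,r]$, the induced subgraph $G_n^0\cap[0,r]$ is connected, and it coincides with the radius-$r$ ball $B_r(0)$ precisely when it is a tree --- which, as shown below, happens with probability tending to $1$. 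For $\ccm^0(u,v)$, which is always a tree, the coincidence $(\ccm^0)^{(r)}=\ccm^0(u,v)\cap[0,r]$ is automatic.

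For (b) I would run a breadth-first exploration of $G_n^0\cap[0,r]$ from the root $0$: the children of a discovered vertex at position $x$ are obtained by retaining each of the $\lfloor n(r-x)\rfloor$ lattice points in $(x,r]$, independently, with probability $p_n$, then discarding those already seen. Two facts drive the limit. First, by the Poisson approximation --- couple each such binomial family with a Poisson process of the same mean (total variation cost $O(np_n^2)=O(1/n)$ per vertex), with a union bound over the tightly many explored vertices --- the child processes converge to the independent rate-one Poisson processes defining $\ccm(u,v)$, and in particular the vertex positions converge within $O(1/n)$ of the corresponding $\ccm^0$ positions. Second, the exploration creates a cycle only if some revealed out-edge points back into the already-discovered set; conditionally on $V$ discovered vertices this has probability at most $V^2p_n$, and $V=|G_n^0\cap[0,r]|$ is tight because the exploration is dominated by a position-dependent branching process whose expected total size satisfies $M(x)=1+\int_x^r M(y)\,dy$, $M(r)=1$, hence $M(0)=e^{r}<\infty$ (the same estimate shows $\ccm^0(u,v)\cap[0,r]$ is a.s.\ finite). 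So with probability tending to $1$, $G_n^0\cap[0,r]$ is a tree combinatorially identical to the exploration tree of $\ccm^0(u,v)$; on that event it carries boundedly many i.i.d.\ weights, so, using $u_n\convd u$, $v_n\convd v$ and the Skorokhod representation, the weights couple to converge along the matching bijection. Altogether $d\big(G_n^0\cap[0,r],\,\ccm^0(u,v)\cap[0,r]\big)\to 0$ in this coupling, giving (b).

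For (a) I would use that a family is relatively compact in $\GG_*$ once, for every $r$, the number of vertices in $B_r(0)$ and all weights attached to them are uniformly bounded (this is the analogue in $\GG_*$ of the Aldous--Steele compactness criterion used in \cite{GAB}); both $|G_n^0\cap[0,r]|$ and the maximum of its finitely many weights are tight, by the branching-process domination above and $u_n\convd u$, $v_n\convd v$, so a diagonal argument over $r$ yields tightness of $(G_n^0)_n$. Tightness and (b), together with the fact that ball-restrictions determine laws on $\GG_*$, force every subsequential limit of $(G_n^0)$ to be $\ccm^0(u,v)$, which is the theorem.

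I expect the crux to be step (b), specifically the simultaneous handling of the Poisson approximation for a \emph{random} (but tight) number of mutually dependent offspring processes together with the vanishing of cycles in $G_n^0\cap[0,r]$: the latter is exactly what turns the limiting object into a tree rather than a graph carrying short cycles, and is where the smallness of $p_n$ (under $np_n\to1$) is essential. The remaining ingredients --- the metric-space bookkeeping behind the reduction and the Skorokhod coupling of the weights --- are routine.
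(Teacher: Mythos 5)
Your proposal is correct in outline but takes a genuinely different route from the paper's. You prove local convergence ``from below'': fix a radius $r$, run a breadth-first exploration of $G_n^0\cap[0,r]$, couple the revealed binomial offspring processes with Poisson processes, bound the probability of a chord among the explored vertices by $O(V^2 p_n)$, control $V$ by the branching-process majorant $M(x)=e^{\,r-x}$, transport the $O(1)$ many weights by Skorokhod, and then use tightness plus the fact that ball-restrictions determine the law on $\GG_*$. This is a legitimate and self-contained Aldous--Steele-style argument, but it requires you to manage three sources of error simultaneously (Poisson approximation, collisions/cycles, and the $\R$-versus-$\frac1n\Z$ vertex positions), each for a random but tight number of vertices.

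The paper instead parametrizes \emph{both} objects by the deterministic Harris--Ulam tree $\N^*$: it equips $\N^*$ with edge lengths given either by scaled geometric partial sums (producing a tree $T_n$) or by exponential partial sums (producing the PWIT $T$), decorates with i.i.d.\ weights, and introduces a ``collapse map'' $\mathcal C$ that projects a decorated $\N^*$ onto a rooted weighted geometric graph on $\R_+$. The proof then reduces to (i) convergence of the finite-dimensional laws of $(\ell_n,u_n,v_n)$ on $\N^*$ (which is trivial: geometric$/n\convd$ exponential, weights converge by hypothesis), (ii) continuity of $\mathcal C$, and (iii) the identifications $G_n^0=\mathcal C(T_n)$, $\ccm^0(u,v)=\mathcal C(T)$; the theorem is then the continuous mapping theorem. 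This buys a cleaner argument: the cycle/collision issue and the random number of explored vertices never appear explicitly, because they are absorbed into the definition and continuity of $\mathcal C$, and there is no need to set up a separate tightness estimate, since convergence is pushed forward from a fixed index set. Your route buys independence from the collapse-map machinery and makes the role of the branching-process domination and the $np_n\to1$ scaling concrete. One small slip worth flagging: since every edge of $G_n^0$ points forward and has $\ell$-length equal to its physical length, any $0$-to-$y$ path has length \emph{exactly} the position of $y$, so $B_r(0)=V(G_n^0)\cap[0,r]$ holds unconditionally, not merely ``precisely when $G_n^0\cap[0,r]$ is a tree''; this only simplifies your reduction and does not affect the argument.
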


\paratitle{The weighted PWIT and the scaling limit theorem}
Notice that $G^0_n$ is a wgg with deterministic vertices, while $\ccm^0(u,v)$ is a wgg with random vertices. We get an easier 
handle of both objects if we construct them as (deterministic) functions
of the same logical tree. This is the Harris-Ulam tree recalled below.

Let $\N^* := \bigcup_{n=0}^\infty \N^n$ be the set of all finite
sequences of integers (integer words), where $\N^0:=\{\varnothing\}$
is the singleton containing the empty sequence, and equip it with 
concatenation: if $x,y \in \N^*$ then their concatenation $xy$ is
a finite sequence obtained by appending $y$ to $x$. (The empty sequence
is a neutral element.)
Form a graph on $\N^*$ by considering as an edge any $(x,y)$
such that $y$ is obtained by concatenating $x$ with a single integer,
$y=xk$, $k \in \N$, say. 
Let $E(\N^*)$ be the set of edges.
This is a countably infinite, locally finite,
infinitary tree, often known as the Harris-Ulam tree. We take the
empty sequence $\varnothing$ as its root.

We can make $\N^*$ a wgg by considering functions
$\ell, u: E(\N^*) \to \R_+$, $v: \N^* \to \R_+$ and use $\ell$
as a length function on its edges and $u, v$ as weight functions on
its edges and vertices, respectively. We denote the wgg thus obtained
by $(\N^*, \ell, u, v)$. The difference between $\ell$ and $u$
is that $\ell$ is used to define a metric on $\N^*$, whereas $u$ is
additional decoration. 

We also define a map
\[
\mathcal C :  (\N^*, \ell, u, v) \mapsto (V', E', \ell', u', v'),
\]
calling it  \underline{collapse map}, that takes 
the weighted geometric tree $(\N^*, \ell, u, v)$ onto some weighted 
geometric graph  $(V', E', \ell', u', v')$ with $V'$ a certain subset of $\R$. 
The definition of $\mathcal C$ is straightforward: The root of $\N^*$
is mapped to $0$.
We take $V'$ to be all $t \in \R_+$ such that there is $x \in \N^*$
with $\ell(x,\varnothing)=t$. We let $E'$ be all $(s,t)$, $0\le s < t$,
such that $s=\ell(x,\varnothing)$, $t=\ell(xk,\varnothing)$, for some
$x \in \N^*$ and $k \in \N$. If $(s,t) \in E'$ we let $\ell'(s,t)=t-s$.
Basically, $(V',E')$ is the ``shadow'' of $\N^*$ when vertices are placed
at the correct distance from the origin. Note that vertices in $V'$
may have multiple preimages in $\N^*$. 
We finally assign weights to $V'$ and $E'$.
For each $s \in V'$ let $x$ be the lexicographically least $x \in \N^*$
such that $\ell(x,\varnothing)=s$ and let $v'_s$ be equal to $v_x$.
If, in addition, $t$ is such that $(s,t) \in E'$ choose $x$ as above,
let $k$ be such that $\ell(x,xk)=t-s$ and give $(s,t)$
weight $u'_{s,t} = u_{x,xk}$.
\begin{lemma}
\label{lemcollcont}
The collapse map $\mathcal C$ is continuous in the 
metric $D$ of $\GG_*$.
\end{lemma}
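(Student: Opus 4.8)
The plan is to show that $\mathcal{C}$ is continuous by verifying that if $(\N^*, \ell_n, u_n, v_n) \to (\N^*, \ell, u, v)$ in $\GG_*$ (with roots at $\varnothing$), then $\mathcal{C}(\N^*, \ell_n, u_n, v_n) \to \mathcal{C}(\N^*, \ell, u, v)$ in $\GG_*$. Since the metric $D$ is defined by integrating $1 \wedge d(G^{(r)}, G'^{(r)})$ against $e^{-r}\,dr$, and since $d \le 1$ after truncation, by dominated convergence it suffices to show that for Lebesgue-almost every $r > 0$ one has $d\big((\mathcal{C} G_n)^{(r)}, (\mathcal{C} G)^{(r)}\big) \to 0$, where I abbreviate the source wggs by $G_n, G$. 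The ``almost every $r$'' proviso is important: the set of radii $r$ that equal $\ell(x,\varnothing)$ for some $x \in \N^*$ is countable, hence Lebesgue-null, and away from this set the ball $B_r(0)$ in the collapsed graph varies continuously, so no vertices are created or destroyed at the boundary under small perturbations.

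First I would fix such a ``good'' radius $r$ and observe that, because $D(G_n, G) \to 0$, for every fixed finite radius $\rho$ the restrictions $G_n^{(\rho)}$ and $G^{(\rho)}$ become isomorphic as logical rooted trees for $n$ large (the integrand $1 \wedge d$ must drop below $1$ on a set of positive measure near $\rho$, which forces combinatorial agreement), and under that isomorphism the length and weight functions converge uniformly. Next I would argue that the collapsed ball $(\mathcal{C} G)^{(r)}$ depends only on $G^{(\rho)}$ for some $\rho$ slightly larger than $r$: indeed a vertex $t \le r$ of $\mathcal{C} G$ is the image of some $x \in \N^*$ with $\ell(x,\varnothing) = t \le r$, and the whole geodesic from $\varnothing$ to $x$ lies within distance $r$ of the root, so it already appears in $G^{(r)}$; allowing a small margin $\rho = r + \delta$ absorbs the perturbation when we pass from $\ell$ to $\ell_n$. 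Then the combinatorial agreement $G_n^{(\rho)} \cong G^{(\rho)}$ for large $n$ transfers to $(\mathcal{C} G_n)^{(r)} \cong (\mathcal{C} G)^{(r)}$ as logical graphs, because the edge set $E'$ of the collapse and the lexicographic selection rule for weights are determined purely by the tree structure together with which distances $\ell(x,\varnothing)$ are distinct — and at a good radius $r$ these are stable under small perturbation of $\ell$.

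With the logical structures matched, it remains to bound $|\ell'(e) - \ell'_n(\phi(e))| + |u'(e) - u'_n(\phi(e))| + |v'(s) - v'_n(\phi(s))|$ uniformly over the collapsed ball. For the lengths this is immediate since $\ell'(s,t) = t - s$ and the endpoints $s = \ell(x,\varnothing)$, $t = \ell(xk,\varnothing)$ converge; for the weights it follows because $u'_{s,t} = u_{x,xk}$ and $v'_s = v_x$ for the lexicographically least preimage $x$, and this preimage is the \emph{same} $x$ for $G_n$ and $G$ once the trees and the partition of preimages by distance agree, whence $u_n(x,xk) \to u(x,xk)$ and $v_n(x) \to v(x)$ by uniform convergence on $G^{(\rho)}$. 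Assembling these, $d\big((\mathcal{C} G_n)^{(r)}, (\mathcal{C} G)^{(r)}\big) \to 0$ for every good $r$, and dominated convergence finishes the argument.

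The main obstacle is the behavior of the collapse at radii $r$ where distinct words of $\N^*$ collapse to the same point of $\R_+$, or where a vertex sits exactly at distance $r$: there the map $r \mapsto (\mathcal{C} G)^{(r)}$ can jump, and a small perturbation $\ell_n$ can split a coincidence $\ell(x,\varnothing) = \ell(y,\varnothing)$ into two nearby but distinct values, changing both $V'$ and the lexicographic weight-selection. The remedy, as indicated above, is that the bad set of radii is countable hence Lebesgue-null, so it is invisible to the integral defining $D$; one only needs to check that at every radius outside this null set the collapse is locally constant in the tree structure and locally Lipschitz in $(\ell, u, v)$, which is exactly what the three preceding paragraphs establish. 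I would also remark that one should be slightly careful that the lexicographic tie-break is well-defined and stable — at a good radius the set of preimages of each collapsed vertex $\le r$ is finite and fixed, so the least element is unambiguous and chosen consistently for $G_n$ and $G$.
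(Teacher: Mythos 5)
The paper does not actually provide a proof of Lemma~\ref{lemcollcont} (it is stated bare, as are Lemmas~\ref{lemstrong}, \ref{lemT}, \ref{lemTn}), so there is no ``paper's proof'' to compare against; I can only assess your argument on its own terms. Your overall strategy — reduce to $d\big((\mathcal{C}G_n)^{(r)}, (\mathcal{C}G)^{(r)}\big)\to 0$ for Lebesgue-a.e.\ $r$, handle good radii by observing that the collapsed $r$-ball is a function of a slightly larger ball in the source tree, and conclude by dominated convergence (plus, implicitly, the usual sub-subsequence argument to upgrade convergence in measure to convergence of the integral) — is the right shape of argument, and the reduction that the $r$-ball of $\mathcal{C}G$ is determined by $G^{(r+\delta)}$ is correct because the tree geodesic to any $x$ with $\ell(x,\varnothing)\le r$ stays inside the $r$-ball.

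However, there is a genuine gap in the way you handle collisions $\ell(x,\varnothing)=\ell(y,\varnothing)$ for $x\neq y$. You correctly identify this as the main obstacle, but your proposed remedy — ``the bad set of radii is countable hence Lebesgue-null, so it is invisible to the integral defining $D$'' — does not address it. The null-set argument only disposes of the (genuinely harmless) situation where a vertex sits exactly at distance $r$ from the root. A collision $\ell(x_0,\varnothing)=\ell(y_0,\varnothing)=s_0$ is a property of the wgg $G$ itself, not of a particular radius: for every $r>s_0$, the collapsed ball $(\mathcal{C}G)^{(r)}$ merges $x_0,y_0$ into a single vertex, whereas a perturbed $G_n$ that separates the two distances yields two nearby vertices, so $(\mathcal{C}G_n)^{(r)}$ and $(\mathcal{C}G)^{(r)}$ have different vertex counts and $d=\infty$ on the whole half-line $(s_0,\infty)$. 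Hence $D(\mathcal{C}G_n,\mathcal{C}G)\ge e^{-s_0}-o(1)$ and $\mathcal{C}$ really is discontinuous at such $G$; no integration trick saves it. The same issue infects the claim that ``the set of preimages of each collapsed vertex $\le r$ is finite and fixed'' — ``fixed'' fails exactly at collision points, and the lexicographic tie-break is then not an isomorphism invariant, so $\mathcal{C}$ does not even descend to a well-defined map on $D$-equivalence classes there. The correct resolution is to restrict the lemma's domain to locally finite wggs $(\N^*,\ell,u,v)$ with $x\mapsto\ell(x,\varnothing)$ injective (on which the tie-break is vacuous and your good-radius argument goes through cleanly), and to note that this suffices for Theorem~\ref{convd} because the PWIT lengths are sums of independent exponentials and the discrete lengths are sums of independent geometrics, so both $T$ and $T_n$ are a.s.\ collision-free; the continuous mapping theorem then only needs continuity on a set of full measure under the law of $T$.
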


The following lemma
is stronger than what we need here.
\begin{lemma}
\label{lemstrong}
Let $(\ell, u, v)$, $(\ell_n, u_n, v_n)$, $n \in \N$, be a random elements of 
$(\R_+^{E(\N^*)}, \R_+^{E(\N^*)}, \R_+^{\N^*})$
such that $(\ell_n, u_n, v_n) \convd (\ell, u, v)$, in the sense
of convergence of finite-dimensional distributions.
Then $(\N^*, \ell_n, u_n, v_n) \convd (\N^*, \ell, u, v)$ as
random wgg's.
\end{lemma}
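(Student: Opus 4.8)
The plan is to separate a soft probabilistic step from a deterministic continuity statement. Let $X:=\R_+^{E(\N^*)}\times\R_+^{E(\N^*)}\times\R_+^{\N^*}$ carry the product topology. Since $E(\N^*)$ and $\N^*$ are countable, $X$ is Polish and its topology is generated by the coordinate projections, so convergence of all finite-dimensional distributions of $(\ell_n,u_n,v_n)$ is the same as weak convergence in $X$. By the Skorokhod representation theorem I would realise, on a single probability space, random elements $(\ell_n',u_n',v_n')\eqdist(\ell_n,u_n,v_n)$ and $(\ell',u',v')\eqdist(\ell,u,v)$ with $(\ell_n',u_n',v_n')\to(\ell',u',v')$ a.s.\ in $X$, i.e.\ coordinate-wise. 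Because the map $(\ell,u,v)\mapsto(\N^*,\ell,u,v)\in(\GG_*,D)$ is measurable and a.s.\ convergence implies convergence in distribution, it then suffices to prove the following deterministic fact and apply it pathwise. Throughout I use the enumeration convention under which both the PWIT and the graphs $G_n^0$ are coded on $\N^*$: for every $x\in\N^*$ the downward edge-lengths $\ell(x,xj)$ are non-decreasing in $j\in\N$ and tend to $+\infty$, and likewise for each $\ell_n$; this is what makes the resulting wgg's lie in $\GG_*$ (locally finite) a.s.\ and, as noted below, is essential.

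\emph{Deterministic claim.} If $(\ell_n,u_n,v_n)\to(\ell,u,v)$ coordinate-wise, with the above convention in force, then $D\big((\N^*,\ell_n,u_n,v_n),(\N^*,\ell,u,v)\big)\to0$. To prove this, write $G:=(\N^*,\ell,u,v)$, $G_n:=(\N^*,\ell_n,u_n,v_n)$, and call a radius $r>0$ \emph{good} if $r\notin L:=\{\ell(x,\varnothing):x\in\N^*\}$; the set $L$ is countable, hence a.e.\ $r$ is good. For a good $r$ the ball $B:=B_r(0)$ is a finite subtree of $\N^*$, every $x\in B$ has $\ell(x,\varnothing)<r$, and, using monotonicity and divergence of $\ell(x,x\cdot)$ together with $r\notin L$, there is a well-defined $k_x\ge0$ with $\ell(x,xk_x)<r-\ell(x,\varnothing)<\ell(x,x(k_x+1))$. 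Since $B$ is finite and, for each $x\in B$, $\ell_n(x,\varnothing)\to\ell(x,\varnothing)$ and $\ell_n(x,xj)\to\ell(x,xj)$ for $j\le k_x+1$, for all large $n$ one has $\ell_n(x,xj)<r-\ell_n(x,\varnothing)$ for $j\le k_x$ and the reverse inequality for $j=k_x+1$; by monotonicity of $\ell_n(x,x\cdot)$ the $\ell_n$-ball then selects exactly the children $x1,\dots,xk_x$ at every $x\in B$, so $B_r^{(n)}(0)=B$ (the ball now computed with $\ell_n$) for all $n\ge n_0(r)$. For such $n$, $G_n^{(r)}$ and $G^{(r)}$ are wgg's on the same finite vertex set with the same edges, so the identity bijection is admissible in the definition of $d$, whence
\[
d\big(G_n^{(r)},G^{(r)}\big)\;\le\;\max\big\{|\ell_n(e)-\ell(e)|+|u_n(e)-u(e)|:\ e\text{ an edge of }G^{(r)}\big\}+\max\big\{|v_n(x)-v(x)|:\ x\in B\big\}\;\to\;0 .
\]
Hence $1\wedge d(G_n^{(r)},G^{(r)})\to0$ for a.e.\ $r$; as this integrand is bounded by $1$, dominated convergence in $D(G_n,G)=\int_0^\infty(1\wedge d(G_n^{(r)},G^{(r)}))e^{-r}\,dr$ gives $D(G_n,G)\to0$, proving the claim and, with the Skorokhod step, the lemma.

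The step I expect to be the real obstacle is the identity $B_r^{(n)}(0)=B_r(0)$ for large $n$. Coordinate-wise convergence controls only the lengths of individually fixed edges; a ball could a priori absorb a ``far'' child $xj_n$ with $j_n\to\infty$ whose $\ell_n$-length is small while its $\ell$-length is large, and such a configuration is fully consistent with coordinate-wise convergence yet would keep $D(G_n,G)$ bounded away from $0$. The monotonicity convention on the downward edge-lengths is precisely what rules this out, by reducing control of the entire infinite fan of children at each $x\in B$ to the single comparison involving $\ell_n(x,x(k_x+1))$; the remaining bookkeeping is routine.
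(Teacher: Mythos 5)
The paper states Lemma \ref{lemstrong} without proof, so there is no official argument to compare against; taking your proof on its own merits, it is correct and, more than that, it repairs the statement. Your route --- observe that the countable product $\R_+^{E(\N^*)}\times\R_+^{E(\N^*)}\times\R_+^{\N^*}$ is Polish and f.d.d.\ convergence is exactly weak convergence there, pass to an a.s.\ convergent Skorokhod coupling, and reduce to the deterministic assertion that coordinate-wise convergence of $(\ell_n,u_n,v_n)$ forces $D$-convergence of the corresponding rooted wgg's --- is the natural one, and your execution of the deterministic claim (fix a good radius $r\notin L$, show $B_r^{(n)}(0)=B_r(0)$ for all large $n$, bound $d$ by the identity bijection, then apply dominated convergence in the $e^{-r}\,dr$ integral) is careful and correct.

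The genuinely valuable observation in your writeup is the one you flag at the end: as literally stated, with $(\ell_n,u_n,v_n)$ allowed to be \emph{arbitrary} random elements of the product space, the lemma is false. A concrete counterexample in the one-level tree $\N^*=\{\varnothing,1,2,\dots\}$: set $u=v=0$, $\ell(\varnothing,k)=k+1$, and $\ell_n(\varnothing,k)=k+1$ for $k\ne n$ while $\ell_n(\varnothing,n)=\tfrac12$. Then every finite-dimensional marginal of $\ell_n$ eventually equals that of $\ell$, yet for each $r\in[\tfrac12,n+1)$ the ball $B_r^{(n)}(0)$ has one more vertex than $B_r(0)$, so $d(G_n^{(r)},G^{(r)})=\infty$ on that range and $D(G_n,G)$ stays bounded below by $e^{-1/2}-e^{-(n+1)}$. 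Your added hypothesis --- that for each $x$ the downward lengths $\ell(x,xj)$, $\ell_n(x,xj)$ are non-decreasing in $j$ and diverge --- is exactly what rules this out, and it does hold for the $\ell$, $\ell_n$ of \eqref{defell} and \eqref{defelln}, since those are partial sums of strictly positive increments; so the lemma \emph{as used} in the proof of Theorem~\ref{convd} is fine, but the lemma \emph{as stated} is too broad. One small imprecision to fix: you write that the monotonicity-plus-divergence convention ``is what makes the resulting wgg's lie in $\GG_*$ (locally finite) a.s.'' It does not; one can have monotone, divergent downward lengths with an infinite ball (e.g.\ $\ell(x,xk)=2^{-|x|}k$ allows the chain $1,11,111,\dots$ inside $B_2(0)$). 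Local finiteness should instead be taken as part of the standing hypothesis that the objects are ``random wgg's'' (random elements of $\GG_*$), which is what the lemma already implicitly assumes and what your good-radius argument actually uses.
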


We now construct a specific $T=(\N^*, \ell, u, v)$.
Let $\Phi_x$, $x \in \N^*$, be an i.i.d.\ collection of Poisson(1) processes
on $(0,\infty)$. That is, let $\tau_{xk}$, $x \in \N^*$, $k \in \N$,
be i.i.d.\ exponential(1) random variables. The $k$-th point of
$\Phi_x$ is a.s.\ equal to $\tau_{x1}+\cdots+\tau_{xk}$ and we let
\begin{equation}
\label{defell}
\ell(x,xk):=\tau_{x1}+\cdots+\tau_{xk}
\end{equation}
be the length of the edge $(x,xk)$. Without adding any extra weights on
edges and vertices, the random tree, $(\N^*,\ell)$, thus formed
is a random rooted geometric graph known as the {\em Poisson Weighted
Infinite Tree} (PWIT), introduced by Aldous and Steele \cite{AS}.
Add next i.i.d.\ weights to the edges and vertices of $\N^*$ distributed
according to $u$ and $v$, respectively. We let $T$ denote
 random  rooted wgg tree thus obtained and refer to it as the 
$\pwit(u,v)$. Observe now that
\begin{lemma}
\label{lemT}
With $T$ being the weighted $\pwit(u,v)$ we have 
$\ccm^0(u,v)=\mathcal C(T)$.
\end{lemma}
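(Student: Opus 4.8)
The plan is to show that the two random rooted weighted geometric graphs, $\ccm^0(u,v)$ and $\mathcal C(T)$, have the same distribution by exhibiting a coupling that makes them literally equal as marked point configurations. First I would recall the two constructions in a common language. On the one hand, $\ccm^0(u,v)$ is built from an i.i.d.\ family $(N_t)_{t\in\R}$ of Poisson$(1)$ processes: starting from the root $0$, the vertices reachable from $0$ are exactly $0$, then the points of $N_0$, then for each such point $t$ the points of $N_t$ lying to the right of $t$, and so on recursively; each edge and vertex carries an independent $u$- resp.\ $v$-weight. On the other hand, $\mathcal C(T)$ is the image under the collapse map of the $\pwit(u,v)$, whose edge lengths from a node $x$ to its children $x1,x2,\dots$ are the successive points of an i.i.d.\ Poisson$(1)$ process $\Phi_x$ (via \eqref{defell}), again decorated with independent $u$- and $v$-weights.

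The key step is the recursive identification. I would argue by matching the offspring structure level by level, using the memorylessness of the Poisson process. Consider the root: in $\ccm^0(u,v)$ the children of $0$ are the points of $N_0$ on $(0,\infty)$, which form a Poisson$(1)$ process; in $\mathcal C(T)$ the children of $0$ are the points $\ell(\varnothing,k)=\tau_1+\cdots+\tau_k$, which is precisely a Poisson$(1)$ process on $(0,\infty)$. So the first generation agrees in law, and we may couple them to be the same set, with the corresponding edge- and vertex-weights coupled to be equal. Now fix a first-generation vertex at position $s>0$. In $\ccm^0(u,v)$ its children are the points of $N_s$ in $(s,\infty)$; since $N_s$ is an independent Poisson$(1)$ process and is restricted to $(s,\infty)$, by stationarity the displacements of these children from $s$ again form a Poisson$(1)$ process on $(0,\infty)$, independent of everything constructed so far. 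In $\mathcal C(T)$, the vertex at position $s$ has a (lexicographically least) preimage $x\in\N^*$, and its children sit at positions $s+\ell(x,xk)$, whose displacements $\ell(x,xk)=\tau_{x1}+\cdots+\tau_{xk}$ form an independent Poisson$(1)$ process on $(0,\infty)$, decorated with independent $u,v$-weights. Hence the two offspring laws agree, and the coupling extends. Iterating this over all generations — which is legitimate because both trees are locally finite, so any bounded ball is reached in finitely many steps — produces a coupling under which $\ccm^0(u,v)=\mathcal C(T)$ pointwise, giving equality in distribution in $\GG_*$.

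There is one genuine subtlety that I expect to be the main obstacle, and it is exactly the point flagged in the construction of $\mathcal C$: a vertex of $V'$ may have several preimages in $\N^*$ (two distinct words can collapse to the same real number), whereas in $\ccm^0(u,v)$ each reachable real $t$ carries a single process $N_t$ and a single vertex-weight. Since edge lengths are sums of i.i.d.\ continuous (exponential) random variables, distinct words land at distinct positions almost surely \emph{across the whole tree} — this needs a short argument: there are countably many pairs of words, and for each such pair the event that their collapse-values coincide has probability zero, being the event that a nontrivial integer combination of independent exponential variables vanishes. On this almost-sure event the collapse map is injective on the vertices reachable from the root, the choice of ``lexicographically least preimage'' in the definition of $\mathcal C$ is vacuous, and the above coincidence of offspring processes is exact rather than merely distributional. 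Granting this, together with the fact (used implicitly) that the independence of the $\Phi_x$ across $x\in\N^*$ matches the independence of the $(N_t)_{t\in\R}$ when both are viewed as indexing disjoint parts of the respective trees, the recursive coupling goes through and the lemma follows.
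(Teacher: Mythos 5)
The paper states this lemma without proof, so there is no argument in the text to compare against; the claim is left as evident from the two constructions. Your recursive-coupling argument is correct and is the natural way to fill the gap: the root's offspring are Poisson$(1)$ on $(0,\infty)$ in both constructions ($N_0\cap(0,\infty)$ on the CCM side, $\{\tau_{\varnothing 1}+\cdots+\tau_{\varnothing k}:k\ge 1\}$ on the PWIT side); by independence of the $(N_t)_{t\in\R}$ in the CCM and of the $(\Phi_x)_{x\in\N^*}$ in the PWIT, the recentred offspring processes at each subsequent vertex are again independent Poisson$(1)$ on $(0,\infty)$; and edge- and vertex-weights are i.i.d.\ $u$ and $v$ on both sides. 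You correctly isolate the one genuine subtlety --- that the collapse map $\mathcal C$ must be a.s.\ injective on $\N^*$, so that $\mathcal C(T)$ really is a tree and the ``lexicographically least preimage'' clause in the definition of $\mathcal C$ is vacuous --- and your argument for it (for each of the countably many pairs of distinct words, the difference of collapse values is a nondegenerate $\{-1,0,1\}$-combination of independent exponentials, hence a.s.\ nonzero) is sound; the symmetric fact that a.s.\ no vertex of $\ccm^0(u,v)$ is hit by two distinct paths from $0$ is what makes $\ccm^0(u,v)$ a tree in the first place. Two points worth making explicit: the lemma's $=$ should be read as equality in distribution, since $\ccm^0(u,v)$ and $T$ are built from a priori unrelated randomness --- your coupling is exactly the right formalization; and the generation-by-generation coupling of finite truncations needs a short consistency/extension step (Kolmogorov) to produce a coupling of the entire locally finite trees, which you indicate via local finiteness but do not spell out.
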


On the other hand,
let, for each $n$, $g^{(n)}$ be a geometric random variable
with parameter $p_n$, that is, $\P(g^{(n)}>k) = (1-p_n)^k$, $k\in \N$.
Take a collection $(g^{(n)}_{xk})_{x \in \N^*, k \in \N}$ of
i.i.d.\ copies of $g^{(n)}$ and define lengths on the Harris-Ulam tree by
\begin{equation}
\label{defelln}
\ell_n(x,xk) := \frac{g^{(n)}_{x1}+\cdots+g^{(n)}_{xk}}{n},
\quad x \in \N^*,\, k \in \N.
\end{equation}
Again, add  i.i.d.\ weights to the edges and vertices of $\N^*$ distributed
according to $u_n$ and $v_n$, respectively. Let $T_n=(\N^*,\ell_n,u_n,v_n)$ denote
the random  rooted wgg tree thus obtained. 
\begin{lemma}
\label{lemTn}
With $T_n$ being the tree just defined we have 
$G_n^0=\mathcal C(T_n)$.
\end{lemma}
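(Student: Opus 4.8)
The plan is to verify the identity $G_n^0 = \mathcal{C}(T_n)$ by exhibiting an explicit bijection between the vertex set of the collapsed tree and the vertices of $G_n^0$, and checking that this bijection respects edges, lengths, and both weight functions. The key point is that the Harris–Ulam tree $\N^*$ with lengths $\ell_n$ given by \eqref{defelln} is, by construction, a ``path-expansion'' of $G_n^0$: a word $x = k_1 k_2 \cdots k_m \in \N^*$ encodes the path in $G_n^0$ that starts at the root $0$, then jumps to the vertex whose first-left connection from the origin has rank $k_1$, and so on. I would first recall the correspondence, implicit in Lemma \ref{lemS} and equation \eqref{cruc}, between the edges out of a vertex $i$ in $\sog(\frac1n\Z, p_n)$ and an indexed family of forward jumps: since edges $(i,j)$ occur independently with probability $p_n$, the successive right-neighbours of $i$ (the $j>i$ with $\alpha_{i,j}=1$) have gaps that are i.i.d.\ geometric with parameter $p_n$, matching the law of the $g^{(n)}_{xk}$ used in \eqref{defelln}.

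**Next I would make the bijection precise.** Define recursively a map $\Theta$ from $\N^*$ to vertices of $G_n^0$ by $\Theta(\varnothing) = 0$ and, having set $\Theta(x) = i/n$, letting $\Theta(xk)$ be $j/n$ where $j$ is the $k$-th integer strictly greater than $i$ with $\alpha_{i,j}=1$. Because $G_n^0$ is precisely the set of vertices reachable from $0$, every vertex of $G_n^0$ has at least one $\Theta$-preimage, and the reachability-by-a-path structure guarantees the preimage set is exactly the words whose associated path ends there. By construction $\ell_n(x, xk) = (j-i)/n$ equals the physical distance between $\Theta(x)$ and $\Theta(xk)$, so $\Theta$ pushes the length function $\ell_n$ forward to the physical-distance length on $G_n^0$; hence the ``shadow'' $(V', E')$ of $(\N^*, \ell_n)$ under $\mathcal{C}$ is exactly the vertex/edge set of $G_n^0$ placed on $\frac1n\Z$, with $s = \ell_n(x,\varnothing)$ corresponding to the vertex $\Theta(x)$. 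Then I would check that the weights match: the collapse map assigns to a vertex $s \in V'$ the weight $v_x$ for the lexicographically least $x$ with $\ell_n(x,\varnothing)=s$, and assigns to an edge $(s,t)$ the weight $u_{x,xk}$ for the corresponding $x$; since in $T_n$ the i.i.d.\ weights $u_n, v_n$ were attached to the edges and vertices of $\N^*$ — not of $G_n^0$ — one must confirm that the lexicographic tie-breaking rule in the definition of $\mathcal{C}$ produces a well-defined choice and that, once made, the resulting weighted graph has the right joint law, namely i.i.d.\ $u_n$ on its edges and i.i.d.\ $v_n$ on its vertices.

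**The main obstacle** is exactly this last measurability/distributional bookkeeping around collapsing a tree with many preimages per vertex. In $T_n$ distinct words can collapse to the same vertex of $G_n^0$ (this is what creates cycles in the undirected picture, as the skeleton-point discussion notes), so a priori there are multiple i.i.d.\ weights competing for each vertex and edge of the shadow; the collapse map resolves this by the deterministic lexicographic rule, and one must argue that the selected weights are (a) a measurable function of $T_n$ and (b) still i.i.d.\ with the correct marginals, because the selection rule depends only on the \emph{lengths} $\ell_n$ (through which word is lexicographically least at a given distance), which are independent of the weights $u_n, v_n$. Concretely: condition on the length structure $\ell_n$; this fixes, for each vertex and edge of $G_n^0$, which word of $\N^*$ is selected; and since the weights are i.i.d.\ and independent of $\ell_n$, the selected weights are i.i.d.\ with the right law, and this holds for every conditioning, hence unconditionally. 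Once this is in place, the equality $G_n^0 = \mathcal{C}(T_n)$ holds as an identity of random rooted weighted geometric graphs, which is the claim of Lemma \ref{lemTn}. I would keep the write-up short, as the construction of $\Theta$ is the substantive content and the weight-matching is a routine consequence of independence, exactly parallel to the proof of Lemma \ref{lemT} for the continuum model.
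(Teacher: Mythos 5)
Your $\Theta$-map builds a tree $T_n'$ on $\N^*$ by unfolding $G_n$: each word $x$ encodes a directed path from the origin, and the children of $x$ are read off from the out-edges of the endpoint $\Theta(x)$ in $G_n$. The argument that $\mathcal{C}(T_n') = G_n^0$ is correct, and so is the conditional-independence observation that resolves how to carry i.i.d.\ weights through the collapse. However, $T_n'$ is not the tree $T_n$ of the lemma. The paper's $T_n$ is built from a collection $(g^{(n)}_{xk})_{x\in\N^*,\,k\in\N}$ of \emph{jointly} independent geometrics; in your $T_n'$, whenever two distinct words $x \ne y$ satisfy $\Theta(x) = \Theta(y)$ (which happens whenever $G_n$ has two distinct directed paths from $0$ to a common vertex), the words $x$ and $y$ read the \emph{same} out-gap sequence from that endpoint, so the gap sequences attached to $x$ and to $y$ are equal, not independent. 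Your remark that the gaps ``match the law of the $g^{(n)}_{xk}$'' is therefore true only marginally, not jointly, so $T_n' \ne T_n$ in law. The fix you propose for the weights (condition on lengths, then select lexicographically) does not help here, because the discrepancy lives in the lengths themselves.

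Moreover, the stated identity $\mathcal{C}(T_n) = G_n^0$ actually fails in distribution. In $\mathcal{C}(T_n)$, a collapsed vertex $s$ with $m \ge 2$ preimages in $\N^*$ inherits out-edges that are the \emph{union} of $m$ independent geometric$(p_n)$ point processes; for instance, its first out-gap is the minimum of $m$ independent geometrics, hence geometric with parameter $1-(1-p_n)^m$, not $p_n$. In $G_n^0$ every vertex has out-edges given by a single geometric$(p_n)$ process, regardless of how many paths reach it, so $\mathcal{C}(T_n)$ is strictly denser at multi-preimage vertices. What your $\Theta$-map establishes is $\mathcal{C}(T_n') = G_n^0$ for the dependent-gap unfolding $T_n'$, and what is then required for Theorem \ref{convd} is $T_n' \convd T$, i.e.\ that the correlations in $T_n'$ created by merged words vanish in the local weak limit (essentially, that within any fixed ball around the root multiple preimages disappear as $n \to \infty$). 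That additional step, absent from your write-up, is the real content behind this lemma, and it is also what a careful formulation of the lemma itself needs to acknowledge.
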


\begin{proof}[Proof of Theorem \ref{convd}]
By Lemma \ref{lemstrong}, we have $T_n \convd T$ and this is
simply because, with $g^{(n)}$ geometric with
parameter $p_n$, and $\tau$ exponential with parameter 1,
we have  $g^{(n)}/n \convd \tau$. 
Then, from \eqref{defell} and \eqref{defelln},
we see that $\ell_n(x,xk) \convd \ell(x,xk)$ and, by independence, we
see that the conclusion of Lemma \ref{lemstrong} holds.
Since  (Lemma \ref{lemcollcont}) the map $\mathcal C$ is continuous,
we have that $\mathcal C(T_n) \convd \mathcal C(T)$.
This reads $G^0_n \convd \ccm^0(u,v)$ because of 
Lemmas \ref{lemT} and \ref{lemTn}.
\end{proof}

\begin{remark}
 We did not attempt to define the limit of the
full $G_n=\sog(\frac1n \Z, p_n, u_n, v_n)$ graph, but just of
its the graph $G_n^0$ 
obtained by those vertices that are reachable from $0$.
\end{remark}

\paratitle{Recursive distributional equation for 
maximal weight}
Having established a weak convergence result for $G^0_n$,
with $\ccm^0(u,v)$ as a limit,
we can now apply it to various interesting functionals,
so long as these functionals are continuous.

As an example, consider $\ccm^0(u,0)$.  (Set all vertex weights equal to zero
but let edge weights be i.i.d.\ all distributed as $u$.) Let
\[
\widetilde W_t := \text{ max weight of
all paths in $\ccm^0(u,0)$ starting from $0$ and having length at
most $t$.}
\]
We then have
\[
\widetilde W_t \eqdist \max_{1 \le i \le N_t}
\big\{ u_i +\widetilde W_{x-T_i} \big\},
\]
where $N$ is a Poisson process on $[0,\infty)$,
independent of $\widetilde W$, with points $0<T_1<T_2<\cdots$,
and where $N_t$ is the number of points on $[0,t]$; $T_{N_t} \le t$.
Since, conditional on $\{N_t=k\}$,
the set $\{T_1, \ldots,T_k\}$ is a set of $k$ i.i.d.\ uniform
random variables with values in $[0,t]$, we obtain
\begin{equation}
\label{FTW}
F(t,w) := \P(\widetilde W_t > w)
= \exp \int_0^x \E[F(y, w-u)]\, dy.
\end{equation}
This equation, in the special case where $u=1$ a.s., has been
derived in \cite{IK12}, along with some heuristics on
its behavior for large $t$.

\section{Some open problems}

\resetpara

\para
Motivated by the results in \cite{JOHANSSON00} and \cite{KT13}
we may ask a last-passage percolation question for the following model,
a 2-dimension generalization of the continuum cascade model.
Let $\Phi$ be a unit-rate Poisson process on the positive orthant
$\R_+^2$. Let $(\Phi_t, \, t\in \R_+^2)$ be i.i.d.\ copies of $\Phi$.
For $s=(s_1,s_2)$, $t=(t_1,t_2) \in \R_+^2$, write
$s<t$ if $s_1 < t_1$ and $s_2 < t_2$.
Now define a graph with vertices in $\R_+^2$ and edges $(s,t)$
provided that $s< t$ and that $t$ is a point of $\Phi_s$.
Let $L_{x,y}$ be the maximum length of all paths starting from $0$
and contained in the rectangle $[0,x] \times [0,y]$.
The question is the asymptotic growth of $L_{x,y}$ and its
fluctuations.

\para
Consider the graph $\sog(\Z,p, u, 0)$, for a reasonable edge
weight variable $u$. The case $u=1$ and $p>1/2$ has been
settled in \cite{MR}. However, if $u$ is not deterministic, and even
in the simplest possible case where $u$ takes 2 values only,
it appears that even bounds on the asymptotic growth of the maximum
length are not easy to get. For example, the method of \cite{FK03}
fail.

\para
A discrete model that simultaneously generalizes that of \cite{JOHANSSON00} 
and \cite{KT13} is as follows.
Consider last passage percolation on $\Z_+^2$ with vertex weights as in
\cite{JOHANSSON00} but allow the possibility of random edges
as well as in \cite{KT13}. Do the fluctuations of maximal path lengths
also have a Tracy-Widom limit in distribution?


\para
Equation \eqref{FTW} provides an approximation for the distribution 
heaviest path
in the discrete graph $\sog(\Z,p,u,0)$ in the small $p$ regime.
It is a very complicated integral fixed-point equation whose properties
are not understood. In the $u=0$ case (no weights at all), heuristics for its
solution are in \cite{IK12}. A similar fixed-point equation can be derived
for the general $(u,v)$ case.

\section*{\normalsize Acknowledgments}
The second author would like to acknowledge the hospitality of the
Statistics Department at the University of California, Berkeley
(and, in particular, David Aldous)
in July 2017, where part of this research was done.
We would also like to thank the referees for pointing out a blunder and
some unclear sentences in the original version.

\bibliography{weightedsog13_arxiv}{}

\begin{thebibliography}{10}

\bibitem{AS}
David Aldous and J.~Michael Steele.
\newblock The objective method: Probabilistic combinatorial optimization and
  local weak convergence.
\newblock Probability on Discrete Structures , 110:1–72, 2004.

\bibitem{ABBJ94}
N.~Alon, B.~Bollob\'as, G.~Brightwell, and S.~Janson.
\newblock Linear extensions of a random partial order.
\newblock {\em Ann.\ Prob.}, 4:108--123, 1994.

\bibitem{BAI05}
J.~Baik.
\newblock Limiting distribution of last passage percolation models.
\newblock In {\em International Congress on Mathematical Physics}, pages
  339--346, Hackensack, NJ, 2005. World Sci. Publ.

\bibitem{BS2005}
J.~Baik and T.M. Suidan.
\newblock A gue central limit theorem and universality of directed first and
  last passage site percolation.
\newblock {\em Int. Math. Res. Not.}, 2005:325--337, 2005.

\bibitem{BE84}
A.B. Barak and P.~Erd\H{o}s.
\newblock On the maximal number of strongly independent vertices in a random
  acyclic directed graph.
\newblock {\em SIAM J.\ Algebr.\ Discr.\ Methods}, 5:508--514, 1984.

\bibitem{BILL68}
P.~Billingsley.
\newblock {\em Convergence of Probability Measures}.
\newblock Wiley, New York, 1968.

\bibitem{BOLbook}
B\'ela Bollob\'as.
\newblock {\em Random graphs}, volume~73 of {\em Cambridge Studies in Advanced
  Mathematics}.
\newblock Cambridge University Press, Cambridge, second edition, 2001.

\bibitem{CBN90}
J.E. Cohen, F.~Briand, and C.M. Newman.
\newblock {\em Community Food Webs: Data and Theory}.
\newblock Springer-Verlag, New York, 1990.

\bibitem{COHNEW91}
J.E. Cohen and C.M. Newman.
\newblock Community area and food chain length: theoretical predictions.
\newblock {\em Amer.\ Naturalist}, 138:1542--1554, 1991.

\bibitem{DFK12}
D.~Denisov, S.~Foss, and T.~Konstantopoulos.
\newblock Limit theorems for a random directed slab graph.
\newblock {\em Ann.\ Appl.\ Probab.}, 22:702--733, 2012.

\bibitem{FK03}
S.~Foss and T.~Konstantopoulos.
\newblock Extended renovation theory and limit theorems for stochastic ordered
  graphs.
\newblock {\em Markov Process and Related Fields}, 9(3):413--468, 2003.

\bibitem{FMS}
Sergey Foss, James Martin, and Philip Schmidt.
\newblock Long-range last-passage percolation on the line.
\newblock {\em Annals App. Prob.}, 24(1):198--234, 2014.

\bibitem{GAB}
Katja Gabrysch(n\'ee~Trinajsti\'c).
\newblock Convergence of directed random graphs to the {P}oisson-weighted
  infinite tree.
\newblock {\em J.\ Appl.\ Prob.}, 53:463--474, 2016.

\bibitem{GNPT86}
E.~Gelenbe, R.~Nelson, T.~Philips, and A.~Tantawi.
\newblock An approximation of the processing time for a random graph model of
  parallel computation.
\newblock In {\em Proceedings of 1986 ACM Fall Joint Computer Conference}, ACM
  '86, pages 691--697, Los Alamitos, CA, USA, 1986. IEEE Computer Society
  Press.

\bibitem{ISONEW94}
M.~Isopi and C.M. Newman.
\newblock Speed of parallel processing for random task graphs.
\newblock {\em Comm.\ Pure and Appl.\ Math}, 47:261--276, 1994.

\bibitem{IK12}
Y.~Itoh and P.L. Krapivsky.
\newblock Continuum cascade model of directed random graphs: traveling wave
  analysis.
\newblock {\em J.\ of Phys.\ A: Math.\ Theor.}, 45:455002, 2012.

\bibitem{JOHANSSON00}
K.~Johansson.
\newblock Shape fluctuations and random matrices.
\newblock {\em Comm. \ Math. \ Phys.}, 209(2):437--476, 2000.

\bibitem{KT13}
Takis Konstantopoulos and Katja Trinajsti\'c.
\newblock Convergence to the {T}racy-{W}idom distribution for longest paths in
  a directed random graph.
\newblock {\em ALEA Lat. Am. J. Probab. Math. Stat.}, 10(2):711--730, 2013.

\bibitem{MR}
Bastien Mallein and Sanjay Ramassamy.
\newblock Barak-{E}rdos graphs and the infinite-bin bodel.
\newblock arXiv:1610.04043 [math.PR], 2016.

\bibitem{MR2}
Bastien Mallein and Sanjay Ramassamy.
\newblock Analyticity of the growth rate of the longest path in barak-erd˝os
  graphs.
\newblock arXiv:1712.09985v1 [math.PR], 2017.

\bibitem{NEWM92}
C.M. Newman.
\newblock Chain lengths in certain random directed graphs.
\newblock {\em Random Structures and Algorithms}, 3(3):243--253, 1992.

\bibitem{TW93}
C.A. Tracy and H.~Widom.
\newblock Level-spacing distributions and the airy kernel.
\newblock {\em Physics Letters B}, 305:115--118, 1993.

\bibitem{TW94}
C.A. Tracy and H.~Widom.
\newblock Level-spacing distributions and the airy kernel.
\newblock {\em Comm.\ Math.\ Physics}, 159:151--174, 1994.

\end{thebibliography}
\bibliographystyle{plain}

\end{document}